\def\showauthornotes{1}
\def\showkeys{0}
\def\showdraftbox{0}
\def\showcolorlinks{1}
\def\usemicrotype{1}
\def\showfixme{0}
\title{Conditional Non-Soficity of p-adic Deligne Extensions:\\on a Theorem of Gohla and Thom}
\author{Michael Chapman, Yotam Dikstein, Alexander Lubotzky}
\date{\today}
\DeclareMathOperator*{\Probability}{\mathbb{P}}
\newcommand{\Pro}[1]{\Probability_{#1}}
\DeclareMathOperator*{\Expectation}{\mathbb{E}}
\newcommand{\Ex}[1]{\Expectation_{#1}}
\newtheorem{theorem}{Theorem}[section]
\newtheorem*{theorem*}{Theorem}
\newtheorem{claim}[theorem]{Claim}
\newtheorem*{claim*}{Claim}
\newtheorem*{proposition*}{Proposition}
\newtheorem*{lemma*}{Lemma}
\newtheorem{corollary}[theorem]{Corollary}
\newtheorem*{corollary*}{Corollary}
\newtheorem*{conjecture*}{Conjecture}
\newtheorem{fact}[theorem]{Fact}
\newtheorem*{fact*}{Fact}
\newtheorem*{hypothesis*}{Hypothesis}
\theoremstyle{definition}
\newtheorem{definition}[theorem]{Definition}
\newtheorem*{definition*}{Definition}
\newtheorem*{observation*}{Observation}
\theoremstyle{remark}
\newtheorem{remark}[theorem]{Remark}
\newtheorem*{remark*}{Remark}
\newcommand{\savehyperref}[2]{\texorpdfstring{\hyperref[#1]{#2}}{#2}}
\newcommand{\Sref}[1]{\hyperref[#1]{\S\ref*{#1}}}
\newcommand{\Authornote}[2]{{\sffamily\small\color{red}{[#1: #2]}}}
\newcommand{\Authornotecolored}[3]{{\sffamily\small\color{#1}{[#2: #3]}}}
\newcommand{\Authorcomment}[2]{{\sffamily\small\color{gray}{[#1: #2]}}}
\newcommand{\Authorstartcomment}[1]{\sffamily\small\color{gray}[#1: }
\newcommand{\Authorfnote}[2]{\footnote{\color{red}{#1: #2}}}
\newcommand{\Authorfixme}[1]{\Authornote{#1}{\textbf{??}}}
\newcommand{\Authormarginmark}[1]{\marginpar{\textcolor{red}{\fbox{\Large #1:!}}}}
\newcommand{\Authornote}[2]{}
\newcommand{\Authornotecolored}[3]{}
\newcommand{\Authorcomment}[2]{}
\newcommand{\Authorstartcomment}[1]{}
\newcommand{\Authorfnote}[2]{}
\newcommand{\Authorfixme}[1]{}
\newcommand{\Authormarginmark}[1]{}
\newcommand{\set}[1]{\{#1\}}
\newcommand{\iprod}[1]{\langle#1\rangle}
\newcommand{\F}{\mathbb{F}}
\newcommand{\Psymb}{\mathbb{P}}
\DeclareMathOperator*{\ProbOp}{\Psymb}
\renewcommand{\Pr}{\ProbOp}
\def\one{{\mathbf{1}}}
\newcommand{\Prob}[2][]{\Pr_{{#1}}\left[#2\right]} 
\newcommand{\textparen}[1]{\text{(#1)}}
\newcommand{\because}[1]{\textparen{because #1}}
\renewcommand{\because}[1]{\textparen{because #1}}
\newcommand\bdot\bullet
\DeclareMathOperator{\sign}{sign}
\newcommand{\Z}{\mathbb Z}
\newcommand{\cF}{\mathcal F}
\renewcommand{\leq}{\leqslant}
\renewcommand{\geq}{\geqslant}
\let\epsilon=\varepsilon
\numberwithin{equation}{section}
\newcommand{\MYstore}[2]{%
  \global\expandafter \def \csname MYMEMORY #1 \endcsname{#2}%
}
\newcommand{\MYload}[1]{%
  \csname MYMEMORY #1 \endcsname%
}
\newcommand{\MYnewlabel}[1]{%
  \newcommand\MYcurrentlabel{#1}%
  \MYoldlabel{#1}%
}
\newcommand{\MYdummylabel}[1]{}
\newcommand{\torestate}[1]{%
  \let\MYoldlabel\label%
  \let\label\MYnewlabel%
  #1%
  \MYstore{\MYcurrentlabel}{#1}%
  \let\label\MYoldlabel%
}
\newcommand{\restatetheorem}[1]{%
  \let\MYoldlabel\label
  \let\label\MYdummylabel
  \begin{theorem*}[Restatement of \prettyref{#1}]
    \MYload{#1}
  \end{theorem*}
  \let\label\MYoldlabel
}
\newcommand{\restatelemma}[1]{%
  \let\MYoldlabel\label
  \let\label\MYdummylabel
  \begin{lemma*}[Restatement of \prettyref{#1}]
    \MYload{#1}
  \end{lemma*}
  \let\label\MYoldlabel
}
\newcommand{\restateprop}[1]{%
  \let\MYoldlabel\label
  \let\label\MYdummylabel
  \begin{proposition*}[Restatement of \prettyref{#1}]
    \MYload{#1}
  \end{proposition*}
  \let\label\MYoldlabel
}
\newcommand{\restateclaim}[1]{%
  \let\MYoldlabel\label
  \let\label\MYdummylabel
  \begin{claim*}[Restatement of \prettyref{#1}]
    \MYload{#1}
  \end{claim*}
  \let\label\MYoldlabel
}
\newcommand{\restatecorollary}[1]{%
  \let\MYoldlabel\label
  \let\label\MYdummylabel
  \begin{corollary*}[Restatement of \prettyref{#1}]
    \MYload{#1}
  \end{corollary*}
  \let\label\MYoldlabel
}
\newcommand{\restatefact}[1]{%
  \let\MYoldlabel\label
  \let\label\MYdummylabel
  \begin{fact*}[Restatement of \prettyref{#1}]
    \MYload{#1}
  \end{fact*}
  \let\label\MYoldlabel
}
\newcommand{\restatedefinition}[1]{
\let\MYoldlabel\label 
\let\label\MYdummylabel 
\begin{definition*}[Restatement of \prettyref{#1}] 
    \MYload{#1} 
\end{definition*} 
\let\label\MYoldlabel 
} 
\newcommand{\restate}[1]{%
  \let\MYoldlabel\label
  \let\label\MYdummylabel
  \MYload{#1}
  \let\label\MYoldlabel
}
\newcommand{\eps}{\epsilon}
\let\origparagraph\paragraph
\renewcommand{\paragraph}[1]{\origparagraph{#1.}}
\let\pref=\prettyref
\newcommand{\dir}[1]{\overset{\to}{#1}}
\newcommand{\bproof}[1]{\begin{proof}[Proof of \pref{#1}]}
\newcommand{\eproof}{\end{proof}}
\newcommand{\coboundary}{{\delta}}
\newcommand{\Img}{{\textrm {Im}}}
\begin{document}

\maketitle
\begin{abstract}
   A long standing problem asks whether every group is sofic, i.e., can be separated by almost-homomorphisms to the symmetric group \(\textup{Sym}(n)\). Similar problems have been asked with respect to almost-homomorphisms to the unitary group \(U(n)\), equipped with various norms. One of these problems has been solved, for the first time in \cite{de2020stability}: some central extensions \(\widetilde{\Gamma}\) of arithmetic lattices $\Gamma$ of \(Sp(2g,\mathbb{Q}_p)\) were shown to be non-Frobenius approximated by almost homomorphisms to \(U(n)\). Right after, it was shown that similar results hold with respect to the $p$-Schatten norms \cite{lubotzky2020non}. It is natural, and has already been suggested in \cite{chapmanLII2023stability} and \cite{gohla2024high}, to check whether the \(\widetilde{\Gamma}\) are also non-sofic. In order to show that they are (also) non-sofic, it suffices: 
   
   (a) To prove that the permutation Cheeger constant of the simplicial complex underlying $\Gamma$ is positive, generalizing \cite{EvraK2016}. This would imply  that $\Gamma$ is stable.
   
   (b) To prove that the (flexible) stability of $\Gamma$ implies the non-soficity of $\widetilde{\Gamma}$. 
\\

   Clause (b) was proved by Gohla and Thom \cite{gohla2024high}. Here we offer a more algebraic/combinatorial treatment to their theorem.
\end{abstract}

\def\G{G}
\def\tG{\tilde{G}}
\def\k{\tau}
\def\g{g}
\def\Id{\textup{Id}}
\def \FF{\mathbb{F}}

\def \Img{\textup{Im}}
\def \sign{\textup{sign}}
\def \defect{\textup{def}}

\def \Sym{\textup{Sym}}
\section{Introduction}
A group is said to be \emph{sofic} if its elements can be separated by almost actions on finite sets.
This notion is due to Gromov \cite{MR1694588} and Weiss  \cite{weiss-2000}, who were motivated by Gottshalk's Conjecture. The class of sofic groups includes all residually finite groups, as well as all amenable groups, and there are sofic groups that are known to be neither. Deciding whether all groups are sofic or not remains a major open problem.
Separation properties by almost homomorphisms to other ``nice'' metric groups are studied, and are known as \emph{approximation properties}. 
Specifically, the case of almost unitary representations, where the unitaries are equipped with some matrix norm, is well studied. For example, Connes \cite{connes1976classification} studied hyperlinearity,  Brown and Ozawa \cite{brown2008textrm} studied property MF, and De Chiffre--Glebsky--Lubotzky--Thom \cite{de2020stability} studied Frobenius approximable groups. The latter were the first to provide examples of  \emph{non-approximable groups}, namely groups whose elements \textbf{cannot} be separated by almost representations (with respect to the Frobenius norm in their case).\footnote{This technique was generalized to $p$-Schacten non-approximability in \cite{lubotzky2020non}.}  Let us say something about their construction, as it is also the study of this paper.  By imitating a classical work of Deligne in the $\mathbb{R}$-coefficents case \cite{deligne1978extensions}, De Chiffre--Glebsky--Lubotzky--Thom   constructed  lattices $\Gamma$ in $\textrm{Sp}(2g,\mathbb{Q}_p)$ that have  central finite extensions 
\begin{equation}\label{eq:the_Deligne_p-adic_extension}
    1\to Z\to \widetilde{\Gamma}\to\Gamma\to 1,
\end{equation}
where $Z$ is the cyclic group of order $p-1$ and every finite index subgroup of $\widetilde{\Gamma}$ contains the unique index $2$ subgroup of $Z$, which implies in particular that $\widetilde{\Gamma}$ is not residually finite if $p\geq 5$. They proved that these extensions are Frobenius stable, and by a simple observation of Glebsky and Rivera \cite{GlebskyRivera} this implies that  $\widetilde{\Gamma}$ is non-Frobenius approximable. Throughout this paper, we call the $\widetilde{\Gamma}$ from their construction the \emph{$p$-adic Deligne extension}.

In \cite{ChapmanL2023stability,chapmanLII2023stability} the authors provide permutation coefficients analogues of standard cohomological notions --- specifically, cocycle expansion (see \pref{def:cocycle_and_coboundary_exp}) and large cosystols (see \pref{def:kappa_large_cosyst}) of a simplicial complex. These cohomological notions are the backbone to the work of Kaufman--Kazhdan--Lubotzky and Evra and Kaufman \cite{KaufmanKL2014,EvraK2016} who resolved Gromov's topological overlapping problem \cite{Gromov2010}.
This permutation setup led \cite{chapmanLII2023stability}, to suggest a plan to prove that the  $p$-adic Deligne extension $\widetilde{\Gamma}$ \eqref{eq:the_Deligne_p-adic_extension} is non-sofic. A similar plan was suggested in \cite{gohla2024high}. The plan consists of two main ingredients:
\begin{enumerate}[(1)]
    \item \label{clause:first_part_plan} The above \(\Gamma\), being a cocompact lattice in \(\textrm{Sp}(2g,\mathbb{Q}_p)\), acts on the Bruhat-tits building \(\mathcal{B}(\textrm{Sp}(2g,\mathbb{Q}_p)\)). Assuming that \(\Gamma\) is torsion free (which we may by passing to a finite index subgroup), it is the fundamental group of \(X = \Gamma \setminus \mathcal{B}(\textrm{Sp}(2g,\mathbb{Q}_p)\). The simplicial complex $X$ was shown in \cite{KaufmanKL2014,EvraK2016}  to be a cocycle expander with $\FF_2$-coefficients. Cocycle expansion of $X$ with \textbf{permutation coefficients} would, in particular, imply that $\Gamma$ is stable (see \pref{def:rho_stability}).
    \item \label{clause:second_part_plan} Conditional on $\Gamma$ being stable, Gohla and Thom \cite{gohla2024high} proved that  $\widetilde{\Gamma}$ is non-sofic.
\end{enumerate}

It is very tempting to try to prove that in a situation like \eqref{eq:the_Deligne_p-adic_extension}, if $\Gamma$ is stable, then also its finite central extension $\widetilde{\Gamma}$. This would imply immediately that $\widetilde{\Gamma}$ is non-sofic, as sofic stable groups are residually finite \cite{GlebskyRivera}. In \pref{sec:non-stale_central_extension}, we will show that this does not hold in general: We present an exact sequence similar to \eqref{eq:the_Deligne_p-adic_extension}, where $\Gamma$ is replaced by some other group (the lamplighter group), which is stable \cite{levit2022infinitely}, but the central extension there is non-stable. The example there is finitely generated but infinitely presented --- see \pref{sec:non-stale_central_extension}. We do not know if this is possible with finitely presented groups.
\\

The main goal of this note is to offer a combinatorial proof to the above ingredient (\ref{clause:second_part_plan}) along the lines of \cite{ChapmanL2023stability,chapmanLII2023stability}. We will show that this can be deduced fairly directly from the ``large cosystol condition'' over \(\F_2\) of \(X\) and its finite covers, that was proved in \cite{KaufmanKL2014,EvraK2016} in order to deduce topological overlapping.

\subsubsection*{Stability, Soficity and Cohomology with $\FF_2$-coefficients}

\begin{definition}[Hamming distance with errors]\label{def:Hamming_metric_errors}
    Let $\Omega\subseteq \Sigma$ be finite sets. The (normalized) \emph{Hamming distance} (with errors) between $\sigma\in \Sym(\Omega)$ and $\sigma'\in \Sym(\Sigma)$ is
\[
d_H(\sigma,\sigma'):=1-\frac{|\{\star\in \Omega\mid \sigma.\star=\sigma'.\star\}|}{|\Sigma|}=\Pro{\star\in \Sigma}[\sigma.\star\neq \sigma'.\star],
\]
where $\sigma.\star=\mathfrak{error}\notin \Sigma$ for every $\star\notin \Omega$, and $\Pro{\star\in \Sigma}$ is the uniform distribution over $\Sigma$.\footnote{Though it is not clear that $d_H$ satisfies the triangle inequality, it does (see \cite{becker2022stability}).} This distance extends to (an $L^\infty$-type distance on) the functions $\varphi\colon S\to \Sym(\Omega)$ and $\psi\colon S\to \Sym(\Sigma)$ by letting 
\[
d_H(\varphi,\psi):=\max_{s\in S}[d_H(\varphi(s),\psi(s))].
\]
\end{definition}
\begin{definition}[Defect and Almost Actions]\label{def:Defect_Almost_Actions}
Let $\Gamma=\langle S|R\rangle$ be a finitely presented group.
The ($L^{\infty}$-)\emph{defect} of a function $\varphi\colon S\to \Sym(\Omega)$ (with respect to $\langle S|R\rangle$) is defined to be 
\[
    \defect(\varphi):=\max_{r\in R}[d_H(\varphi(r),\Id_\Omega)]= \max_{r\in R}\Pro{\star\in \Omega}[\varphi(r).\star\neq \star].
\]
We say that $\varphi$ is an \emph{$\eps$-almost action} of $\Gamma$ on $\Omega$ if $\defect(\varphi)\leq\eps$.\footnote{Note that in \cite{ChapmanL2023stability,chapmanLII2023stability} the $L^1$-defect and $L^1$-distance between functions is used. In the context of this paper it makes the calculations nicer to use the $L^\infty$-setup, so we chose it.}
\end{definition}

\begin{definition}[Soficity]\label{def:soficity}
    A  finitely presented group $\Gamma=\langle S|R\rangle$   is \emph{sofic} if there is a family of $\eps_n$-almost actions $\varphi_n\colon S\to \Sym(\Omega_n)$, where $\eps_n\xrightarrow{n\to \infty}0$, and for every $w\in \mathcal{F}_S$ which is not in the normal closure $\langle\langle R\rangle\rangle$, $d_H(\varphi_n(w),\Id_{\Omega_n})\xrightarrow{n\to\infty}1$.
Such a family is called a \emph{sofic approximation}.
\end{definition}

\begin{definition}[$\rho$-stability]\label{def:rho_stability}
    A function $\rho\colon [0,1]\to [0,1]$ is said to be a \emph{rate function} if $\rho(\eps)\xrightarrow{\eps\to 0}0$. A finitely presented group $\Gamma=\langle S|R\rangle$ is said to be $\rho$-stable,\footnote{This is commonly called pointwise flexible stability in permutations (see the discussions in \cite{becker2022stability}).} for some rate function $\rho$, if for every $\eps$-almost action $\varphi\colon S\to \Sym(\Omega)$, there is a finite superset $\Omega\subseteq \Sigma$ and a  $\Gamma$-action $\psi\colon S\to \Sym(\Sigma)$ such that $d_H(\varphi,\psi)\leq \rho(\eps)$.
\end{definition}

We can now formally state the corollary from the work of Gohla and Thom that is important for us:
\begin{theorem}[Gohla--Thom \cite{gohla2024high}]\label{thm:GT}
    For every dimension $g$, there exists a prime number $p_g$, such that the following holds for all primes \(p\) larger than $p_g$. If $\Gamma$ from the $p$-adic  Deligne extension \eqref{eq:the_Deligne_p-adic_extension} is  $\rho$-stable (\pref{def:rho_stability}), then the extension  $\widetilde{\Gamma}$ is non-sofic (\pref{def:soficity}). 
\end{theorem}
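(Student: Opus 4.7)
The plan is to assume for contradiction that $\widetilde{\Gamma}$ is sofic and to derive a violation of the uniform large $\F_2$-cosystolic expansion of $X=\Gamma\setminus\mathcal{B}(\textrm{Sp}(2g,\mathbb{Q}_p))$ and of its finite covers, established in \cite{KaufmanKL2014,EvraK2016} precisely for $p\geq p_g$. I would first pass to the quotient $\bar G:=\widetilde{\Gamma}/Z_0$, where $Z_0\leq Z$ is the unique index-$2$ subgroup; soficity descends through quotients by finite normal subgroups, so $\bar G$ is sofic, and we obtain the central extension
\[
1\longrightarrow \F_2\longrightarrow \bar G\overset{\pi}{\longrightarrow}\Gamma\longrightarrow 1
\]
whose nontrivial central element $z\in\F_2\subset\bar G$ lies in every finite-index subgroup of $\bar G$ (this is the ``mod-$2$'' consequence of the defining property of the $p$-adic Deligne extension). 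Hence $z$ must act trivially in every genuine finite action of $\bar G$, while in any sofic approximation $\varphi_n\colon T\to\Sym(\Omega_n)$ of $\bar G$ (with $T\ni z$) the separation property forces $d_H(\varphi_n(z),\Id_{\Omega_n})\to 1$.

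Since $z$ is central of order $2$, the permutation $\varphi_n(z)$ approximately commutes with every $\varphi_n(g)$ and satisfies $\varphi_n(z)^2\approx\Id$, so its orbits give an approximate partition of $\Omega_n$ into pairs. Quotienting by these orbits produces a set $\bar\Omega_n$ of size $\approx|\Omega_n|/2$ and an $O(\defect(\varphi_n))$-almost action $\bar\varphi_n\colon S\to\Sym(\bar\Omega_n)$ of $\Gamma$ on the quotient: the relations of $\Gamma$, when lifted to $\bar G$, equal either $e$ or $z$, and both act trivially on the quotient. Applying $\rho$-stability yields a finite superset $\bar\Sigma_n\supseteq\bar\Omega_n$ and a genuine $\Gamma$-action $\bar\Psi_n\colon\Gamma\to\Sym(\bar\Sigma_n)$ with $d_H(\bar\Psi_n,\bar\varphi_n)\leq\rho(\defect(\varphi_n))$. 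Decomposing $\bar\Sigma_n$ into transitive pieces $\Gamma/\Gamma_{n,j}$ determines a (possibly disconnected) finite cover $X_n=\bigsqcup_j \Gamma_{n,j}\setminus\mathcal{B}$ of $X$, on which the pulled-back mod-$2$ extension class $[\omega_n]\in H^2(X_n,\F_2)$ is nonzero on every component (again by the Deligne property).

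The combinatorial heart of the argument is to encode $\varphi_n$ relative to $\bar\Psi_n$ as an $\F_2$-valued $1$-cochain $s_n$ on $X_n$. Concretely, after fixing a set-theoretic section $\tilde{\cdot}\colon\Gamma\to\bar G$, for each generator $g\in S$ and each $x\in\bar\Sigma_n$ set $s_n(g)(x)\in\F_2$ to record whether $\varphi_n(\tilde g)$ sits in the fiber above $\bar\Psi_n(g)\cdot x$ without, or with, an additional $\varphi_n(z)$-swap. A direct computation from the almost-homomorphism relations of $\varphi_n$, using centrality and $\varphi_n(z)^2\approx\Id$, then yields
\[
\|\delta s_n-\omega_n\|\;\leq\; C\bigl(\defect(\varphi_n)+\rho(\defect(\varphi_n))\bigr)
\]
in the Hamming norm on $C^2(X_n,\F_2)$, for a universal constant $C$. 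Now $\delta s_n$ is a coboundary while $\omega_n$ is a cocycle representing a nonzero class on every component of $X_n$, so $\delta s_n-\omega_n$ is itself a cocycle representing the nonzero class $-[\omega_n]$. The uniform large $\F_2$-cosystolic lower bound $\mu>0$ for $X$ and its finite covers from \cite{KaufmanKL2014,EvraK2016} then forces $\|\delta s_n-\omega_n\|\geq\mu$, contradicting the estimate above once $\defect(\varphi_n)$ is sufficiently small.

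The main obstacle I expect is the third paragraph: identifying the correct $\F_2$-valued chain complex on the disconnected cover $X_n$, verifying that the Evra--Kaufman cosystolic bound transfers with uniform constants to the entire family of finite covers produced by the stability of $\Gamma$ (which should follow from the same machinery they use to deduce topological overlapping for Ramanujan complexes), and tracking how both $\defect(\varphi_n)$ and $\rho(\defect(\varphi_n))$ propagate into the Hamming norm of $\delta s_n-\omega_n$. Once this accounting is in place, the argument is an $\F_2$-coefficient shadow of the permutation-coefficient stability arguments of \cite{chapmanLII2023stability}, dovetailing with the paper's stated aim of an algebraic and combinatorial treatment of the Gohla--Thom theorem.
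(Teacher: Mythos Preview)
Your proposal is correct and follows essentially the same route as the paper: reduce to a $\nicefrac{\Z}{2\Z}$-extension, normalize a sofic approximation so that the central element acts (exactly) as a fixed-point-free involution commuting with the other generators, quotient by its orbits to obtain an almost $\Gamma$-action, apply $\rho$-stability to get a genuine action and hence a finite cover $Y$ of $X$, record the sign data of the lift as a $1$-cochain $\zeta$ on $Y$, and then show $\|\delta\zeta-\phi'\|\le O(\eps+\rho(\eps))$ contradicts the uniform $\kappa$-large cosystole bound from \cite{EvraK2016} (the paper restricts to a single connected component at the end, which handles your ``disconnected cover'' concern). The only place you are somewhat more cavalier than the paper is in ``quotienting by the approximate orbits'' of $\varphi_n(z)$; the paper spends Claims~2.2--2.4 and Corollary~2.5 making $\psi(\tau)$ \emph{exactly} equal to $-\Id$ and exactly central before quotienting, but this is precisely the bookkeeping you flagged in your final paragraph.
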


\begin{definition}[Simplicial complexes and $\FF_2$-cohomology]\label{def:simp_cpxs_and_cohomology}
    Let $X$ be a finite, $d$-dimensional, pure simplicial complex.  Namely: there is a finite set $X(0)$; a $k$-cell $\sigma$ in $X$ is a subset of $X(0)$ of size $k+1$, i.e., $\sigma\subseteq X(0)$ and $|\sigma|=k+1$; if $\sigma\in X$ and $\tau\subseteq \sigma$, then $\tau\in X$; every cell in $X$ is contained in some $d$-cell.  A $k$-cochain with $\FF_2$-coefficients is a function $\alpha\colon X(k)\to \FF_2$, and the collection of $k$-cochains is denoted by $C^k=C^k(X;\FF_2)$. The coboundary map $\delta=\delta_k\colon C^k\to C^{k+1}$ is defined by 
\[
\forall \sigma\in X(k+1)\ \colon \ \ \delta\alpha(\sigma)=\sum_{\substack{\tau\subseteq\sigma\\ \tau\in X(k)}}\alpha(\tau).
\]
The $k$-cocycles $Z^k=Z^k(X;\FF_2)$ is the kernel of $\delta_k$ while the $k$-coboundaries $B^k=B^k(X;\FF_2)$ is the image of $\delta_{k-1}$. It is straightforward to check that $\delta_{k}\circ\delta_{k-1}=0$, which means $B^k$ is a subspace of $Z^k$.  The $k^\textrm{th}$ cohomology group of $X$ with $\FF_2$ coefficients $H^k(X;\FF_2)$ is the quotient $\nicefrac{Z^k}{B^k}$.

There is a fully supported probability density function $w$ on $k$-cells of $X$, defined by 
\begin{equation}\label{eq:def_of_weight_of_cell}
    \forall \sigma \in X(k)\ \colon \ \ w(\sigma)=\frac{1}{\binom{d+1}{k+1}}\Pro{\tau\in X(d)}[\sigma\subseteq\tau],
\end{equation}
where $\Pro{\tau\in X(d)}$ is the uniform distribution over $d$-cells in $X$.
This weight function induces a norm  on cochains as follows
\begin{equation}\label{eq:weighted_norm_on_cells}
    \forall \alpha\in C^k\ \colon \ \ \|\alpha\|=\sum_{\sigma\colon \alpha(\sigma)=1}w(\sigma),
\end{equation}
which in turn induces a metric $d_w$ on the $k$-cochains by letting $d_w(\alpha_1,\alpha_2)=\|\alpha_1-\alpha_2\|$. This is also the probability \(\alpha_1(\sigma) \ne \alpha_2(\sigma)\) when sampling \(\sigma\) by sampling a \(d\)-face \(\rho \in X(d)\) uniformly at random, and then taking \(\sigma \subseteq \rho\) uniformly at random. We denote the probability defined by this distribution by \(\Pr_{\sigma \overset{w}{\sim} X(i)}\), that is, for every set $E \subseteq X(i)$, \[\Prob[\sigma \overset{w}{\sim} X(i)]{E} = \sum_{\sigma \in E}w(\sigma).\]
\end{definition}

\begin{definition}[Cocycle and Coboundary expansion]\label{def:cocycle_and_coboundary_exp}
       A (finite, $d$-dimensional, pure) simplicial complex $X$ is said to be an \emph{$\eta$-cocycle expander} (this $\eta$ is commonly referred to as the Cheeger constant of the complex) for some $\eta>0$, if for every $0\leq k\leq d-1$, and every $k$-cochain $\alpha$, we have
       \[
\|\delta\alpha \|\geq \eta\cdot d_w(\alpha,Z^k),
       \]
       where $d_w(\alpha,Z^k)$ is the distance of $\alpha$ to the closest $k$-cocycle.
       It is said to be an \emph{$\eta$-coboundary expander} if in addition the cohomology groups $H^k(X;\FF_2)$ vanish for all $0\leq k\leq d-1$.
\end{definition}
While the above notion is not used in this part, it is used in \cite{EvraK2016} to deduce the following definition which we do need. In addition, its generalization to permutations, namely cocycle expansion with permutation coefficients, is a strong version of stability of the fundamental group of the complex, as discussed in \cite{ChapmanL2023stability,chapmanLII2023stability}.
\begin{definition}[Large cosystoles]\label{def:kappa_large_cosyst}
    A (finite, $d$-dimensional, pure) simplicial complex $X$ is said to have \emph{$\kappa$-large cosystoles} for some $\kappa>0$,  if for every $0\leq k\leq d-1$, and every $k$-cocycle $\alpha$ which is \textbf{not} a $k$-coboundary, we have 
\[
d_w(\alpha,B^k)\geq \kappa,
\]
where $d_w(\alpha,B^k)$ is the distance of $\alpha$ to the closest $k$-coboundary.
\end{definition}

\begin{theorem}[{\cite[Corollary 6.3, Remark 6.5]{EvraK2016}}] \label{thm:ek16}
    For every positive integer \(g\), and a large enough $p$ (with respect to $g$), there exists \(\kappa=\kappa(p,g) > 0\) such that the following holds. Let \(X\) be a finite quotient of \(\mathcal{B}(\textrm{Sp}(2g,\mathbb{Q}_p))\). Then, the \((d-1)\)-skeleton of \(X\) (and all of its finite covers) have \(\kappa\)-large cosystols.
\end{theorem}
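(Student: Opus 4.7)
The plan is to use the local-to-global method for cohomological expansion with $\FF_2$-coefficients, introduced by Kaufman-Kazhdan-Lubotzky \cite{KaufmanKL2014} and refined in \cite{EvraK2016}. The affine building $\mathcal{B}(\textrm{Sp}(2g,\mathbb{Q}_p))$ is contractible, so its own cohomology vanishes; what is at stake for a finite quotient $X$ is that nontrivial cohomology classes (pulled up from $\pi_1(X)$) cannot be represented by ``small'' cocycles, and this is precisely the $\kappa$-large cosystole property of \pref{def:kappa_large_cosyst}. The same passes to every finite cover $X'$, since such a cover is itself a finite quotient of $\mathcal{B}(\textrm{Sp}(2g,\mathbb{Q}_p))$ with identical local structure, so it suffices to prove a uniform bound for one representative.

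The first step is to analyze the local structure. Because $X$ is a finite quotient of $\mathcal{B}(\textrm{Sp}(2g,\mathbb{Q}_p))$, the link of every cell of codimension at least $2$ is isomorphic to a (join of) spherical building(s) of rank at most $g$, each defined over the residue field $\FF_p$ (or a finite extension thereof). The crucial local input is that when $p$ is taken large enough in terms of $g$, every such spherical building has vanishing $\FF_2$-cohomology in the relevant degrees and is a $\kappa_0$-coboundary expander over $\FF_2$ for some constant $\kappa_0=\kappa_0(p,g)>0$. This is the \emph{sole} source of the hypothesis ``$p$ large with respect to $g$''.

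The second step is to invoke the local-to-global principle of \cite{EvraK2016}: any $d$-dimensional bounded-degree pure simplicial complex all of whose codimension-$\geq 2$ links are $\kappa_0$-coboundary expanders over $\FF_2$ has $\kappa$-large cosystoles on its $(d-1)$-skeleton, where $\kappa=\kappa(\kappa_0,d)>0$. One passes from expansion of links to global cosystolic expansion by a cone/filling argument: starting from a cocycle $\alpha\in Z^k$ that is close (in the weighted metric) to a coboundary, one uses coboundary expansion of the links of each codimension-$(k{+}1)$ face to produce local $(k{-}1)$-cochains trivializing $\alpha$ on each link, and then glues them globally into an actual $(k{-}1)$-cochain whose coboundary equals $\alpha$, provided the initial distance was below a bounded multiple of $\kappa_0$. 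Applying this to $X$ and each of its finite covers yields the claimed uniform $\kappa=\kappa(p,g)$.

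The hard part is the coboundary-expansion input for spherical buildings over $\FF_2$. Over $\RR$ this is handled by Garland's spectral method, but $\FF_2$-coefficients admit no direct spectral translation, and one must instead combinatorially decompose cochains along the chamber system using the Bruhat decomposition, controlling terms by the action of parabolic subgroups. The resulting expansion constants degrade with the rank of the building and improve with the size of the residue field, which is precisely what forces $p$ to be taken large compared to $g$. Assuming that ingredient as a black box from \cite{KaufmanKL2014}, the remaining local-to-global step is essentially formal, and the theorem follows.
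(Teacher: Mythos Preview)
The paper does not supply its own proof of \pref{thm:ek16}; it is quoted from \cite[Corollary~6.3, Remark~6.5]{EvraK2016} and used as a black box. So there is no proof in this paper to compare your proposal against---what you have written is a summary of the argument in the cited source, not of anything appearing here.

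That said, your outline does match the high-level strategy of \cite{EvraK2016}: coboundary expansion of the spherical-building links (for which one needs $p$ large relative to the rank) feeds into a local-to-global machine that outputs cosystolic expansion, and in particular large cosystols, uniformly over all finite quotients since the links depend only on $(g,p)$. Two minor points of inaccuracy: (i) the $\FF_2$-coboundary expansion of spherical buildings is not the content of \cite{KaufmanKL2014}; that paper provides the local-to-global step in dimension two, while the link estimates come from elsewhere (Gromov, Lubotzky--Meshulam--Mozes); (ii) your description of the filling step is phrased as ``close to a coboundary implies is a coboundary,'' which is the correct contrapositive, but the actual mechanism in \cite{EvraK2016} proceeds via locally minimal representatives---one shows that a nonzero cocycle of small weight cannot be locally minimal, hence every cocycle of weight below $\kappa$ is a coboundary. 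These are presentational rather than mathematical issues; as a gloss on the cited result your proposal is adequate.
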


\subsubsection*{The combinatorial statement and proof}
    The notion of coboundary expansion was developed independently by Linial--Meshulam \cite{LinialM2006} and by Gromov \cite{Gromov2010}. Gromov proved that coboundary expansion implies topological overlapping. Later, it was shown in \cite{DotterrerKW2018} that it is enough to be a cocycle expander with large cosystols (a.k.a\ cosystolic expander) to satisfy topological overlapping. In \cite{KaufmanKL2014,EvraK2016} it was shown that the $d$-dimensional skeleton of certain quotients of  $(d+1)$-dimensional Bruhat--Tits buildings are cocycle expanders with large cosystols, providing the first examples of bounded degree complexes with the topological overlapping property. 
    The group $\Gamma$ in the  $p$-adic Deligne extension \eqref{eq:the_Deligne_p-adic_extension} is the fundamental group of a complex $X$ that satisfies the conditions of \cite{KaufmanKL2014,EvraK2016}, and in particular has large cosystols. 
    Actually, because the conditions of \cite{KaufmanKL2014,EvraK2016} are \textbf{local} in nature, the same is true for every (connected) finite covering space of $X$, namely, all these coverings have large cosystols. 
    The main goal of this paper is to provide a combinatorial treatment to the following theorem of Gohla and Thom:
\begin{theorem}[Theorem 3.13 from \cite{gohla2024high}]\label{thm:main}

    Let $X$ be a finite, $3$-dimensional, pure simplicial complex with fundamental group $\Gamma$. Assume that:
    \begin{enumerate}
        \item Every finite cover of $X$ has $\kappa$-large cosystoles (\pref{def:kappa_large_cosyst}) for some constant $\kappa>0$.
        \item The fundamental group of $X$, denoted by  $\Gamma$, 
     is $\rho$-stable (\pref{def:rho_stability}) for some rate function $\rho$.
     \item The group $\Gamma$ has a central $2$-extension 
  $
0\to \nicefrac{\Z}{2\Z}\xrightarrow{\iota}\widetilde{\Gamma}\xrightarrow{\pi}\Gamma\to 1,$ such that \(\widetilde{\Gamma}\) is not residually finite.
    \end{enumerate} 
    Then, $\widetilde{\Gamma}$ is non-sofic.
\end{theorem}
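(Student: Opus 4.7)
The plan is to assume for contradiction that $\widetilde{\Gamma}$ is sofic, and to manufacture from a sofic approximation a $2$-cochain on a suitable finite cover $Y_n\to X$ that both represents the restriction of the classifying cocycle of the central extension to $Y_n$ and has weighted norm $o(1)$. The non-residual-finiteness hypothesis (interpreted as $\iota(\Z/2)\subseteq \bigcap_{N\triangleleft_f \widetilde{\Gamma}}N$, which is the reading relevant to the $p$-adic Deligne setting) forces the extension to be non-split over every finite-index subgroup $H\leq \Gamma$, hence $[c]|_H\ne 0$ in $H^2(H;\F_2)$; via the injection $H^2(H;\F_2)\hookrightarrow H^2(Y_n;\F_2)$ for $H=\pi_1(Y_n)$ (from the $5$-term exact sequence, using that the universal cover of $Y_n$ is simply connected), the restricted class is non-zero in $H^2(Y_n;\F_2)$. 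The $\kappa$-large cosystoles hypothesis on $Y_n$ then forces any cocycle representative of this non-zero class to have weighted norm at least $\kappa$, contradicting $o(1)$.

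In more detail, let $\tilde\varphi_n\colon \tilde S\to \Sym(\Omega_n)$ be an $\eps_n$-almost action of $\widetilde{\Gamma}$ with $d_H(\tilde\varphi_n(z),\Id)\to 1$, and set $\tau_n:=\tilde\varphi_n(z)$. Because $z$ is central of order $2$, after an $O(\eps_n)$ adjustment we may restrict to a large subset of $\Omega_n$ on which $\tau_n$ is a fixed-point-free involution commuting exactly with every $\tilde\varphi_n(\tilde s)$. Fix generators $S$ of $\Gamma$ with lifts $\tilde S\subseteq \widetilde{\Gamma}$; for each relation $r\in R$ of $\Gamma$, the corresponding string of lifts gives $\tilde r=z^{c(r)}$ for a function $c\colon R\to \F_2$, which we view as a $2$-cocycle on $X$ representing the extension class (the cocycle condition on $3$-cells reflects relations-among-relations lifting to $\widetilde{\Gamma}$). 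On the quotient $\Omega_n^\ast:=\Omega_n/\langle\tau_n\rangle$, the induced map $\varphi_n^\ast\colon S\to \Sym(\Omega_n^\ast)$ is an $O(\eps_n)$-almost action of $\Gamma$, since every power of $\tau_n$ is the identity on the quotient. By the $\rho$-stability of $\Gamma$ there exist $\Sigma_n^\ast\supseteq \Omega_n^\ast$ and a true $\Gamma$-action $\psi_n^\ast$ on $\Sigma_n^\ast$ with $d_H(\varphi_n^\ast,\psi_n^\ast)\leq \rho(O(\eps_n))=:\delta_n\to 0$. This action corresponds to a finite cover $Y_n\to X$ with $\pi_1(Y_n)$ of finite index in $\Gamma$.

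Now extend the double cover $\Omega_n\to \Omega_n^\ast$ to a trivializable double cover $\tilde\Sigma_n\cong \Sigma_n^\ast\times \F_2\to \Sigma_n^\ast$ by choosing sheets arbitrarily on $\Sigma_n^\ast\setminus \Omega_n^\ast$. For each oriented edge $e=(v\to \psi_n^\ast(s).v)$ of $Y_n$, define $\epsilon_n(e)\in \F_2$ as the sheet change of $\tilde\varphi_n(\tilde s)$ at $(v,0)$ when $(v,0)\in \Omega_n$, and $0$ otherwise, obtaining a $1$-cochain $\epsilon_n\in C^1(Y_n;\F_2)$. For a $2$-cell $\sigma\in Y_n(2)$ lifting a triangle of $X$ whose boundary spells the relation $r_\sigma\in R$, traversing $\partial\sigma$ starting at $(v,0)$ produces $(v,\delta\epsilon_n(\sigma))$, whereas $\tilde\varphi_n(\tilde r_\sigma).(v,0)\approx \tau_n^{c(r_\sigma)}.(v,0)=(v,c(r_\sigma))$, and these agree outside an $O(\eps_n+\delta_n)$-weight exceptional set of triangles. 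Therefore $\|c|_{Y_n}-\delta\epsilon_n\|\leq C(\eps_n+\delta_n)\to 0$, while by the opening discussion $c|_{Y_n}-\delta\epsilon_n$ is a non-coboundary cocycle on $Y_n$ (it represents the non-zero class $[c|_{Y_n}]$). The $\kappa$-large cosystoles of $Y_n$ then yield $\|c|_{Y_n}-\delta\epsilon_n\|\geq \kappa$, contradicting the upper bound for $n$ large.

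The main obstacle is the weighted-measure bookkeeping behind $\|c|_{Y_n}-\delta\epsilon_n\|\leq C(\eps_n+\delta_n)$. One must check that all but an $O(\eps_n+\delta_n)$-weight fraction of triangles $\sigma\in Y_n(2)$ are \emph{good}, in the sense that every vertex of $\sigma$ lies in $\Omega_n^\ast$, each generator appearing along $\partial\sigma$ commutes with $\tau_n$ and satisfies $\psi_n^\ast(s)=\varphi_n^\ast(s)$ at the traversed point, and the relation $\tilde r_\sigma$ has trivial defect at the base point of the traversal. Each failure mode has small per-vertex probability under the uniform-top-face sampling defining the cell weight $w$; a union bound, together with the fact that a $2$-cell has a bounded number of incident edges and vertices, converts the per-vertex $L^\infty$-defect bounds for $\tilde\varphi_n$ and $\varphi_n^\ast$ into the required per-cell bound with a constant $C=C(\dim X,|S|,|R|)$.
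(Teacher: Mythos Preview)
Your proposal is correct and follows essentially the same route as the paper: normalize the sofic approximation so that $\tau\mapsto -\Id$ is a fixed-point-free involution commuting with all generators, project to an $O(\eps)$-almost $\Gamma$-action, stabilize to a genuine action giving a finite cover $Y$, read off a $1$-cochain from the sheet data, and show its coboundary is $O(\eps+\rho(\eps))$-close to the pulled-back extension cocycle, contradicting the $\kappa$-large cosystoles since the restricted extension never splits. The only cosmetic differences are that the paper passes explicitly to a connected component of $Y$ (you should too, since the stabilized action need not be transitive) and verifies by a direct computation (its Fact~2.10) that the pulled-back cocycle represents the restricted extension class, where you instead invoke injectivity from the $5$-term exact sequence; note also that within the proof by contradiction $\Gamma$ is sofic (as a quotient of the assumed-sofic $\widetilde{\Gamma}$ by a finite kernel) and $\rho$-stable, hence residually finite, so the stronger reading $\iota(\Z/2\Z)\subseteq\bigcap_{N\triangleleft_f\widetilde{\Gamma}}N$ that you flag is in fact automatic.
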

\begin{remark}
    We should mention that the result in \cite{gohla2024high} is stronger than \pref{thm:main}. They use a weaker notion of stability. They also use a general finite central subgroup \(A\) instead of \(\nicefrac{\Z}{2\Z}\), but that part can also be covered by our method by appealing to \cite{KaufmanM2018} (see also \cite{DiksteinD2023}) instead of \cite{EvraK2016}. We prefer to keep this note as elementary and self contained as possible.
\end{remark}

\begin{proof}[Proof of \pref{thm:GT} assuming \pref{thm:main}]
For simplicity, we will assume $p$ is congruent to $1$ modulo $4$.
In this case, 
by moving to finite index subgroups and quotients, we can assume that the center $Z$ of the  $p$-adic Deligne extension \eqref{eq:the_Deligne_p-adic_extension} is the order $2$-group $\nicefrac{\Z}{2\Z}$. We also assume that \(p\) is large enough so that \pref{thm:ek16} applies for a complex \(X\) whose fundamental group is \(\Gamma\). Moreover, if we denote by $\tau$ the non-trivial element of the center, then it is trivial in every finite quotient of $\widetilde{\Gamma}$. As $\Gamma$ in the  $p$-adic Deligne extension \eqref{eq:the_Deligne_p-adic_extension} is the fundamental group of a complex $X$ that satisfies the conditions of \pref{thm:ek16}, there is some $\kappa>0$ such that every connected covering $Y$ of $X$ has $\kappa$-large cosystols. The rest of the conditions match, and hence we are done.
\end{proof}
    
Let us sketch our proof of \pref{thm:main}, as it is quite elementary. It is standard (see  \pref{fact:correspondence_2_ext_and_cohomology}) that a central $2$-extension of the fundamental group of a simplicial complex $X$ defines a $2$-cocycle $\phi$ on $X$ (up to a $2$-coboundary, namely a cohomology class).  If $\xi$ is an almost action of $\widetilde{\Gamma}$ that (almost) separates $\tau=\iota(1)$ from $\Id_{\widetilde{\Gamma}}$, then  it is standard (see Section \ref{sec:stability_results}) that there is a close almost action $\psi$ that maps $\tau$ to  a fixed point free involution that commutes with the $\psi$-images of all other generators of $\widetilde{\Gamma}$. By looking at the action induced by $\psi$ on $\psi(\tau)$-orbits, we get an almost action $\varphi$ of $\Gamma$. 
As $\Gamma$ is stable, there is a $\Gamma$-action $f$ which is close to $\varphi$. 
As $\Gamma$ is the fundamental group of $X$, such a $\Gamma$-action $f$ defines a covering $\mathcal{P}\colon Y \to X$, and the fundamental group $H$ of (any connected component of) $Y$ can be seen as a subgroup of $\Gamma=\pi_1(X)$. 
Moreover, by assumption, such a $Y$ has large cosystols. By the correspondence theorem, there is a finite index subgroup $\widetilde{H}\leq \widetilde{\Gamma}$ such that $\tau\in \widetilde{H}$ and $\pi(\widetilde{H})=H$, and the extension $1\to \nicefrac{\Z}{2\Z}\to \widetilde{H}\to H\to 1$ induces a $2$-cocycle $\phi'$ on $Y$.  
As \(\widetilde{\Gamma}\) is not residually finite, it must be the case that $\tau$ is trivial in every finite quotient of $\widetilde{\Gamma}$. Hence this extension does not split, which in particular implies $\phi'$ is a non-coboundary cocycle, and this means $\phi'$ must be far from any coboundary. 
On the other hand, it is not hard to use $\psi$ to define a $1$-cochain $\zeta$ on $Y$ such that  the coboundary 
 $\delta \zeta$  is close to $\phi'$ (see \eqref{eq:1}). This is a contradiction to the existence of the original $\xi$, namely, $\tau$ \textbf{cannot} be separated from $\Id_{\widetilde{\Gamma}}$ by almost actions, and thus $\widetilde{\Gamma}$ is non-sofic.

\subsection*{Acknowledgements}
We thank Corentin Bodart for providing us \pref{claim:separation_of_z_implies_periodicity} and \pref{cor:Gamma_I_not_res_finite} below. Michael Chapman acknowledges with gratitude the Simons Society of Fellows and is supported by a grant from the Simons Foundation (N. 965535). 
Yotam Dikstein kindly acknowledges the support by the National Science Foundation under Grant No. DMS-1926686.
Alexander Lubotzky acknwoledges with gratitude the support of the European Research Council (ERC)
under the European Union's Horizon 2020 (N. 882751), and the research grant from the Center for New Scientists at the Weizmann Institute of Science.

\section{Basic observations on stability and cohomology}
This Section contains  standard definitions and results. We include  proofs for a self contained and clear presentation.

\subsection{Elementary stability results}\label{sec:stability_results}

\begin{claim}[Close to almost action is an almost action]\label{claim:perturbing_almost_actions}
    Let $\Gamma=\langle S|R\rangle$ be a finitely presented group, and let $\ell$ be the maximal length of a word in $R$. Let  $\varphi\colon S\to \Sym(\Omega)$  and $\psi\colon S\to \Sym(\Sigma)$, where $\Omega\subseteq\Sigma$, be such that $\varphi$ is an $\eps_1$-almost action of $\Gamma$, and $d_H(\varphi,\psi)\leq \eps_2$. Then $\psi$ is an $(\eps_1+(\ell+1)\cdot \eps_2)$-almost action of $\Gamma$.
\end{claim}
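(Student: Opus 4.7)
The plan is to fix a relator $r = s_1 s_2 \cdots s_{\ell'} \in R$ of length $\ell'\le\ell$ and bound $d_H(\psi(r),\Id_\Sigma)=\Pr_{\star\in\Sigma}[\psi(r).\star\ne\star]$ by $\eps_1+(\ell+1)\eps_2$. The first preliminary observation I need is that the hypothesis $d_H(\varphi,\psi)\le\eps_2$ forces $|\Sigma\setminus\Omega|/|\Sigma|\le\eps_2$: every $\star\in\Sigma\setminus\Omega$ automatically satisfies $\varphi(s).\star=\mathfrak{error}\ne\psi(s).\star$ (since $\psi(s).\star\in\Sigma$), so these points alone exhaust the budget $\eps_2$ on any single generator $s$. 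In particular, the contribution to $d_H(\psi(r),\Id_\Sigma)$ from $\star\notin\Omega$ is at most $|\Sigma\setminus\Omega|/|\Sigma|\le\eps_2$.

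For the remaining contribution with $\star\in\Omega$, I will write $\{\psi(r).\star\ne\star\}\cap\{\star\in\Omega\}$ as the union of $\{\varphi(r).\star\ne\star\}\cap\{\star\in\Omega\}$ and $\{\varphi(r).\star\ne\psi(r).\star\}\cap\{\star\in\Omega\}$. The first event has $\Pr_{\star\in\Sigma}$-mass at most $\eps_1\cdot|\Omega|/|\Sigma|\le\eps_1$ directly from the defect hypothesis on $\varphi$. For the second, I will run a standard hybrid/telescoping argument: define the $\varphi$- and $\psi$-orbits
\[
y_i:=\varphi(s_{i+1})\cdots\varphi(s_{\ell'}).\star,\qquad z_i:=\psi(s_{i+1})\cdots\psi(s_{\ell'}).\star
\]
for $0\le i\le\ell'$, so that $y_{\ell'}=z_{\ell'}=\star$, $y_0=\varphi(r).\star$, and $z_0=\psi(r).\star$. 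Whenever $y_0\ne z_0$, there is a smallest index $i$ with $y_i=z_i$ but $y_{i-1}\ne z_{i-1}$, which forces $\varphi(s_i).y_i\ne\psi(s_i).y_i$. Since $\star\in\Omega$ and $\varphi$ permutes $\Omega$, the whole orbit $\{y_j\}$ lies in $\Omega$, and for each fixed $i$ the map $\star\mapsto y_i$ is a bijection of $\Omega$ onto itself. A union bound over $i\in\{1,\ldots,\ell'\}$, combined with the hypothesis $d_H(\varphi(s_i),\psi(s_i))\le\eps_2$, therefore yields a total contribution of at most $\ell'\eps_2\le\ell\eps_2$.

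Summing the three contributions gives $\eps_2+\eps_1+\ell\eps_2=\eps_1+(\ell+1)\eps_2$, as required. The argument has no real obstacle and is entirely routine; the only subtle point is the bookkeeping of the factor $|\Sigma\setminus\Omega|/|\Sigma|$ coming from the asymmetry between the domains $\Omega\subseteq\Sigma$, which accounts for exactly one of the $\eps_2$ summands and explains the $(\ell+1)$ rather than $\ell$ in the final bound.
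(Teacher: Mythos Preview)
Your proof is correct and follows essentially the same approach as the paper: both arguments account for the extra $\eps_2$ via the mass of $\Sigma\setminus\Omega$, bound the $\varphi$-defect term by $\eps_1$, and control $d_H(\varphi(r),\psi(r))$ by $\ell\eps_2$ through a step-by-step comparison along the word. The only cosmetic difference is that the paper first isolates the two-factor triangle inequality $d_H(\sigma\sigma',\zeta\zeta')\le d_H(\sigma,\zeta)+d_H(\sigma',\zeta')$ and then applies it inductively together with the metric triangle inequality $d_H(\psi(r),\Id_\Sigma)\le d_H(\psi(r),\varphi(r))+d_H(\varphi(r),\Id_\Omega)+d_H(\Id_\Omega,\Id_\Sigma)$, whereas you unpack the same computation directly as a hybrid/telescoping argument.
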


\begin{proof}
Recall that if $\sigma,\sigma'\in \Sym(\Omega)$ and $\zeta,\zeta'\in \Sym(\Sigma)$, then the following triangle inequality holds:\begin{equation}\label{eq:triangle_ineq_Hamming_metric}
d_H(\sigma\sigma',\zeta\zeta')\leq d_H(\sigma,\zeta)+d_H(\sigma',\zeta').    
\end{equation}
This is because, if $\sigma\sigma'.\star\neq \zeta\zeta'.\star$, then one of the following must occur  --- either $\sigma'.\star\neq \zeta'.\star$ or $\sigma'.\star=\zeta'.\star=\diamond$ but $\sigma.\diamond\neq \zeta.\diamond$. The probability of the first occurring is  $d_H(\sigma',\zeta')$, and the probability of the second occurring is at most $d_H(\sigma,\zeta)$, which proves the inequality. 

Recall that 
$\max_{s\in S}(d_H(\varphi(s),\psi(s))=d_H(\varphi,\psi)=\eps_2$. 
By  using  \eqref{eq:triangle_ineq_Hamming_metric}, we can deduce that for every word in the free group over \(S\), $w\in \cF_S$ of length at most  $\ell$, $d_H(\varphi(w),\psi(w))\leq \ell \cdot \eps_2$. In particular, for every $r\in R$ we have $d_H(\varphi(r),\psi(r))\leq \ell \cdot \eps_2$. From the triangle inequality of the Hamming distance with errors, for every $r\in R$, 
\[
d_H(\psi(r),\Id_\Sigma)\leq d_H(\psi(r),\varphi(r))+d_H(\varphi(r),\Id_\Omega)+d_H(\Id_\Omega,\Id_\Sigma)\leq \ell\cdot \eps_2+d_H(\varphi(r),\Id_\Omega)+\eps_2.
\]
Note that in the right-most inequality we use the inequality \(d_H(\Id_\Omega,\Id_\Sigma) \leq \eps_2\) that follows the fact that \(d_H(\varphi,\psi) \leq \eps_2\). Thus
\[
\defect(\psi)=\max_{r\in R}[d_H(\psi(r),\Id_\Sigma)]\leq (\ell+1)\eps_2+\underbrace{\max_{r\in R}[d_H(\varphi(r),\Id_\Omega)]}_{\defect(\varphi)}\leq (\ell+1)\eps_2+\eps_1,
\]
as claimed.
\end{proof}

\begin{claim} [Almost involutions are close to involutions]\label{claim:fixing_to_involution}
\label{claim:2-torsion-Sym} Let $\zeta\in\Sym\left(\Omega\right)$. Then,
there is $\tau\in\Sym\left(\Omega\right)$ such that $\tau^{2}=\Id$ and
$d_{H}\left(\zeta,\tau\right)= d_{H}\left(\zeta^{2},\Id\right)$.
\end{claim}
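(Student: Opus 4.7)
The plan is to construct $\tau$ explicitly from the cycle structure of $\zeta$. The key observation is that the set $A := \{x \in \Omega : \zeta^2.x = x\}$ consists exactly of the fixed points of $\zeta$ together with the elements lying in $2$-cycles of $\zeta$, and on this set $\zeta$ itself already acts as an involution. So the natural candidate is to take $\tau = \zeta$ on $A$ and $\tau = \Id$ on the complement.

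First I would check that $A$ is $\zeta$-invariant: applying $\zeta$ to both sides of $\zeta^2.x = x$ yields $\zeta^2(\zeta.x) = \zeta.x$, so $\zeta.x \in A$ whenever $x \in A$. Consequently $\zeta|_A$ is a well-defined permutation of $A$ that squares to the identity, so setting $\tau.x = \zeta.x$ for $x \in A$ and $\tau.x = x$ for $x \notin A$ produces a genuine involution $\tau \in \Sym(\Omega)$.

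Next I would compute $d_H(\zeta,\tau)$ by identifying the disagreement set. On $A$ the two maps agree by construction, contributing nothing. For $x \notin A$ we have $\tau.x = x$, so disagreement occurs precisely when $\zeta.x \neq x$; but if $\zeta.x = x$, then $\zeta^2.x = x$, forcing $x \in A$, a contradiction. Hence the disagreement set is exactly $\Omega \setminus A$, whose normalized size is $d_H(\zeta^2,\Id)$ by the very definition of $A$. This yields the claimed equality $d_H(\zeta,\tau) = d_H(\zeta^2,\Id)$.

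There is no real obstacle here; the whole argument is a short cycle-structure calculation. The only small subtlety worth flagging is that the statement demands \emph{equality} of distances rather than merely an upper bound, so one must check not only that $\tau$ agrees with $\zeta$ on all of $A$ but also that no spurious agreement on $\Omega \setminus A$ reduces the distance further — which is exactly the implication ``$\zeta.x = x \Rightarrow x \in A$'' used above.
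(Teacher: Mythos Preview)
Your proof is correct and is essentially identical to the paper's: the paper also takes \(W = \{\star : \zeta^2.\star = \star\}\), defines \(\tau\) to equal \(\zeta\) on \(W\) and the identity off \(W\), and concludes \(d_H(\zeta,\tau) = 1 - |W|/|\Omega| = d_H(\zeta^2,\Id)\). Your write-up is in fact more careful than the paper's, spelling out the \(\zeta\)-invariance of \(A\) and the absence of spurious agreement on \(\Omega\setminus A\).
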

\begin{proof}
Let $W=\left\{ \star\in \Omega \mid\zeta^{2} .\star=\star\right\} $.
Note that the restriction $\zeta|_{W}$ is an involution $W\rightarrow W$.
Define 
\[
\tau\left(\star\right)=\begin{cases}
\zeta\left(\star\right) & \star\in W,\\
\star & \star\notin W.
\end{cases}
\]
Then $\tau^{2}=\Id$ and $d_{H}\left(\zeta,\tau\right)=1-\frac{|W|}{|X|}=d_{H}\left(\zeta^{2},\Id\right)$.
\end{proof}

\begin{claim}[Almost fixed point free involutions are close to fixed point free involutions]\label{claim:fixing_fixed_point_free}
        Let $\zeta\in \Sym(\Omega)$ be an involution. Assume $d_H(\zeta,\Id)\geq 1-\eps$. Then, there is an involution with no fixed points $\tau\in \Sym(\Sigma)$, where $|\Sigma|=2\cdot\lceil\nicefrac{|\Omega|}{2}\rceil$, such that $d_H(\zeta,\tau)\leq 2\eps$.
\end{claim}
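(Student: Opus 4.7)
The plan is to build $\tau$ by explicitly fixing up the small set of fixed points of $\zeta$, with the only subtlety being a parity argument when $|\Omega|$ is odd. Let $F=\{\star\in\Omega\mid\zeta.\star=\star\}$ be the fixed-point set of $\zeta$. Unpacking \pref{def:Hamming_metric_errors}, the hypothesis $d_H(\zeta,\Id)\geq 1-\eps$ says $|\Omega\setminus F|/|\Omega|\geq 1-\eps$, so $|F|\leq\eps|\Omega|$. Because $\zeta$ is an involution, $\Omega\setminus F$ decomposes into disjoint $2$-cycles of $\zeta$, so $|\Omega|-|F|$ is even; thus $|F|$ and $|\Omega|$ have the same parity. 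The strategy is to keep all the $2$-cycles of $\zeta$ intact and to pair up the elements of $F$ (adding one auxiliary point when necessary to make the pairing possible).

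When $|\Omega|$ is even, take $\Sigma=\Omega$. Then $|F|$ is even, so we can partition $F$ into arbitrary pairs and define $\tau$ to agree with $\zeta$ on $\Omega\setminus F$ and to swap each chosen pair on $F$. The resulting $\tau$ is a fixed-point-free involution that disagrees with $\zeta$ only on $F$, so $d_H(\zeta,\tau)=|F|/|\Omega|\leq\eps\leq 2\eps$.

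The more delicate case is $|\Omega|$ odd (so $|\Sigma|=|\Omega|+1$, and $|F|$ is odd). Adjoin a new symbol $\bullet$ to form $\Sigma=\Omega\cup\{\bullet\}$; then $|F\cup\{\bullet\}|$ is even, so we may pair its elements arbitrarily. Let $\tau$ agree with $\zeta$ on $\Omega\setminus F$ and swap according to the chosen pairing on $F\cup\{\bullet\}$. Then $\tau$ is a fixed-point-free involution on $\Sigma$, and it disagrees with $\zeta$ exactly on $F\cup\{\bullet\}$ (note $\zeta.\bullet=\mathfrak{error}\neq\tau.\bullet$), so the disagreement has size $|F|+1$.

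The main (and only) obstacle is checking $(|F|+1)/(|\Omega|+1)\leq 2\eps$ in the odd case. The key observation is a parity trick: any involution on an odd-size set fixes at least one point, so $|F|\geq 1$, and combined with the hypothesis $|\Omega\setminus F|\geq(1-\eps)|\Omega|$ this forces $\eps|\Omega|\geq 1$. Therefore
\[
d_H(\zeta,\tau)=\frac{|F|+1}{|\Omega|+1}\leq\frac{\eps|\Omega|+1}{|\Omega|+1}\leq\frac{2\eps|\Omega|}{|\Omega|+1}\leq 2\eps,
\]
completing the proof. The entire argument is essentially a bookkeeping exercise; its only content is recognizing that the extra ``$+1$'' incurred by adjoining $\bullet$ is automatically absorbed by the parity-forced lower bound $\eps|\Omega|\geq 1$.
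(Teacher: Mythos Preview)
Your proof is correct and follows essentially the same approach as the paper: keep $\zeta$ on its $2$-cycles, pair up the fixed points (adjoining one auxiliary point when the parity requires it), and bound the disagreement set. Your version is in fact more careful than the paper's, which writes $d_H(\zeta,\tau)\leq\frac{|W|+1}{|\Omega|}\leq 2\eps$ without spelling out the parity observation $\eps|\Omega|\geq 1$ that you correctly identify as the reason the ``$+1$'' is absorbed.
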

\begin{proof}
    Let $W=\{\star\in \Omega\mid \zeta.\star=\star\}$. Since $d_H(\zeta,\Id)=1-\nicefrac{|W|}{|\Omega|}$, we can deduce that $|W|\leq \eps|\Omega|$. If $W$ is odd, add a vertex to it and make it even. Now that $W$ is even sized, we can choose any perfect matching on it and define $\tau$ to be the involution induced by this perfect matching. Extend $\tau$ to the rest of $\Omega$ by letting it act as $\zeta$ on the vertices out of $W$. Then the resulting $\tau$ is an involution with no fixed points, and 
    \[
d_H(\zeta,\tau)\leq \frac{|W|+1}{|\Omega|}\leq 2\eps.
    \]
\end{proof}
Let $\Omega$ be a finite set, $\Omega_\pm=\{\pm\}\times \Omega=\{+\star,-\star\mid \star\in \Omega\}$, and let $\sign\colon \Omega_\pm \to \{\pm\}$ be the projection on the left coordinate, namely  $\sign(\pm\star)=\pm$, while $|\cdot|\colon \Omega_\pm \to \Omega$ is the projection on the right coordinate, namely $|\pm \star|=\star$. Note that for $\spadesuit\in \Omega_\pm$, we have $\spadesuit=\sign(\spadesuit)|\spadesuit|$.

\begin{claim}[Almost commuting with a fixed point free involution is close to commuting with the involution]\label{claim:fixing_involutions_to_commute}
    Let $\Omega$ be a finite set, and let $\Omega_\pm$ be as above. For every permutation $\zeta\in \Sym(\Omega_\pm)$, there is a permutation $\sigma\in \Sym(\Omega_\pm)$ that commutes with sign flips, namely $\sigma(-\Id)=(-\Id)\sigma$, and 
    \[
d_H(\sigma,\zeta)\leq d_H([-\Id,\zeta],\Id), 
    \]
    where $[a,b]=aba^{-1}b^{-1}$ is the multiplicative commutator.
    \end{claim}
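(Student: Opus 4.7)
The plan is to leave $\sigma$ equal to $\zeta$ wherever $\zeta$ already respects the pair structure $\{\{+\star,-\star\}:\star\in\Omega\}$ induced by $-\Id$, and to surgically redefine $\sigma$ on the remaining elements. The guiding principle is that a permutation of $\Omega_\pm$ commutes with $-\Id$ if and only if it maps each pair $\{+\star,-\star\}$ to some pair $\{+\star',-\star'\}$; this is immediate from the fact that $-\Id$ is precisely the involution that swaps the two elements within each pair.

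First, define the ``good set''
\[
G \;=\; \{\spadesuit\in\Omega_\pm \;:\; \zeta(-\spadesuit)=-\zeta(\spadesuit)\}.
\]
Because the defining condition is symmetric in $\spadesuit\leftrightarrow -\spadesuit$, $G$ is closed under $-\Id$, i.e.\ it is a union of pairs. Moreover, for any pair $\{+\star,-\star\}\subseteq G$, $\zeta$ sends it to the pair $\{\zeta(+\star),-\zeta(+\star)\}$, so $\zeta(G)$ is also a union of pairs, and hence so is $\Omega_\pm\setminus\zeta(G)$. Next, a short change-of-variables computation identifies $|G^c|/|\Omega_\pm|$ with $d_H([-\Id,\zeta],\Id)$: writing $\heartsuit=\zeta^{-1}\spadesuit$, one finds $[-\Id,\zeta].\spadesuit=-\zeta(-\heartsuit)$, so this equals $\spadesuit=\zeta(\heartsuit)$ precisely when $\heartsuit\in G$, giving
\[
d_H([-\Id,\zeta],\Id) \;=\; \Pro{\heartsuit\in\Omega_\pm}[\heartsuit\notin G] \;=\; \frac{|G^c|}{|\Omega_\pm|}.
\]

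Now construct $\sigma$ by setting $\sigma|_G := \zeta|_G$, and on $G^c$ choosing any pair-preserving bijection onto $\Omega_\pm\setminus\zeta(G)$. Such a bijection exists because $G^c$ and $\Omega_\pm\setminus\zeta(G)$ are both unions of pairs of equal cardinality (match the $k := |G^c|/2$ pairs in the domain to the $k$ pairs in the codomain arbitrarily, and for each matched pair choose either the sign-preserving or sign-flipping bijection between them). The resulting $\sigma$ is a permutation of $\Omega_\pm$ that sends pairs to pairs, so it commutes with $-\Id$. Finally, since $\sigma$ and $\zeta$ agree on $G$,
\[
d_H(\sigma,\zeta) \;\leq\; \frac{|G^c|}{|\Omega_\pm|} \;=\; d_H([-\Id,\zeta],\Id),
\]
as required.

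The proof is essentially bookkeeping, and there is no serious obstacle. The only point that requires verification is that $\zeta(G)$ is a union of pairs (which is exactly the definition of $G$), so that its complement is also a union of pairs and a pair-preserving extension of $\sigma$ on $G^c$ can be chosen freely.
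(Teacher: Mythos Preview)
Your proof is correct and follows essentially the same approach as the paper's. The paper parametrizes the good set by $W=\{\star\in\Omega:\zeta(-\star)=-\zeta(+\star)\}\subseteq\Omega$, extends the injective map $\star\mapsto|\zeta(+\star)|$ to a permutation $\overline{\sigma}$ of $\Omega$, and lifts to $\Omega_\pm$; your set $G$ is precisely $\{\pm\star:\star\in W\}$, and your ``pair-preserving bijection on $G^c$'' is a mild generalization of the paper's sign-preserving lift of $\overline{\sigma}$ on $\Omega\setminus W$.
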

    \begin{proof}

  Let $W=\{\star\in \Omega \mid \zeta.-\star=-(\zeta.+\star)\}$. Then $d_H([-\Id,\zeta],\Id)=1-\nicefrac{|W|}{|\Omega|}$. Let $\overline{\sigma}\colon W\to \Omega$  be the function $\overline{\sigma}(\star)=|\zeta.+\star|$. Then, $\overline{\sigma}$ is a partial permutation, namely it is injective. This is because $\overline{\sigma}(\star)=\overline{\sigma}(\diamond)$ for $\star,\diamond\in W$ implies $|\zeta.+\star|=|\zeta.+\diamond|$, and this means \[
  \zeta.+\star=\zeta.+\diamond \quad\textrm{or}\quad \zeta.+\star=-\zeta.+\diamond=_{\diamond\in W} \zeta.-\diamond.
  \]
  As  $\zeta$ is invertible, the two cases are 
  \[
  +\star=+\diamond \quad\textrm{or}\quad +\star=-\diamond,
  \]
  but the second case is impossible as their left coordinates (i.e., their signs) disagree. This means $+\star=+\diamond$,
  and in particular their right coordinates agree, namely $\star=\diamond$, proving $\overline{\sigma}$ is injective. 
  
  As every partial permutation can be extended to a permutation, extend $\overline{\sigma}$ so that it becomes a permutation on $\Omega$. Define $\sigma\colon \Omega_\pm\to \Omega_\pm$ as follows:
  For every $\spadesuit\in \Omega_\pm$, if $|\spadesuit|\in W$, then let $\sigma.\spadesuit=\zeta.\spadesuit$. Otherwise, let $\sigma.\spadesuit=\sign(\spadesuit)\overline{\sigma}.|\spadesuit|$.
It is straightforward that $d_H(\sigma,\zeta)\leq 1-\nicefrac{|W|}{|\Omega|}=d_H([-\Id,\zeta],\Id)$. Moreover, if $\star\in W$, then 
\[
\sigma.-\star=\zeta.-\star=_{\star\in W}-\zeta.+\star=_{\star\in W}-\sigma.+\star.
\]
Otherwise, 
\[
\sigma.-\star=-\overline{\sigma}.\star=-\sigma.+\star,
\]
which implies that $\sigma(-\Id)=(-\Id)\sigma$, as required.
 \end{proof}

\begin{corollary}\label{cor:nice_sofic_approx}
    Let $\Gamma=\langle S|R\rangle$ be a finitely presented sofic group with a generator $\tau\in S$ that is a central non-trivial involution in $\Gamma$. Then, there is a sofic approximation $\psi_n\colon S\to \textrm{Sym}((\Omega_n)_\pm)$, such that $\tau$ is always sent to the sign flip $\psi_n(\tau)=-\Id$, and every permutation in the image $\Img(\psi_n)$ commutes with the sign flip.
\end{corollary}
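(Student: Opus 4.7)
The plan is to start from an arbitrary sofic approximation $\varphi_n\colon S\to \Sym(\Omega_n)$ of $\Gamma$ and apply three successive Hamming-small perturbations to arrive at the desired $\psi_n$: first make $\varphi_n(\tau)$ into an honest involution, then into a fixed-point-free involution (which we then identify with the sign flip $-\Id$ on $(\Omega_n')_\pm$), and finally modify the other $\varphi_n(s)$ so they commute with this involution. The four claims of \Sref{sec:stability_results} are tailored exactly to this sequence: \pref{claim:fixing_to_involution}, \pref{claim:fixing_fixed_point_free}, and \pref{claim:fixing_involutions_to_commute} supply the three perturbations, while \pref{claim:perturbing_almost_actions} guarantees that each small Hamming modification preserves the almost-action property.

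Before starting, I would enlarge $R$ to include $\tau^2$ and $[\tau,s]$ for every $s\in S$, which is permissible because $\tau$ is a central involution in $\Gamma$ and which only increases the maximum relation length by a constant (so $\defect(\varphi_n)$ remains $o(1)$ under the new presentation). With this preparation, the defect of $\varphi_n$ directly controls $d_H(\varphi_n(\tau)^2,\Id)$ and each $d_H([\varphi_n(\tau),\varphi_n(s)],\Id)$, supplying precisely the hypotheses needed for the perturbation claims.

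Executing the three steps in order: \pref{claim:fixing_to_involution} produces an involution $\tau_n$ at distance $o(1)$ from $\varphi_n(\tau)$; since $\tau$ is non-trivial in $\Gamma$, the sofic condition forces $d_H(\varphi_n(\tau),\Id)\to 1$, and hence $d_H(\tau_n,\Id)\to 1$, so \pref{claim:fixing_fixed_point_free} yields a fixed-point-free involution $\tau_n'$ on a set of even size (adjoining at most one auxiliary element) at $o(1)$ distance from $\tau_n$. I would identify this enlarged set with $(\Omega_n')_\pm$ so that $\tau_n'=-\Id$, extending each $\varphi_n(s)$ to act trivially on any adjoined element. Accumulated triangle inequalities via \eqref{eq:triangle_ineq_Hamming_metric} then give $d_H([-\Id,\varphi_n(s)],\Id)\to 0$, so \pref{claim:fixing_involutions_to_commute} delivers $\sigma_n(s)$ that commutes with $-\Id$ and is at distance $o(1)$ from $\varphi_n(s)$. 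Setting $\psi_n(\tau)=-\Id$ and $\psi_n(s)=\sigma_n(s)$ otherwise, \pref{claim:perturbing_almost_actions} ensures $\psi_n$ is an $o(1)$-almost action, and the separation property transfers from $\varphi_n$ by another triangle inequality on word length: for $w\notin \langle\langle R\rangle\rangle$ one has $d_H(\psi_n(w),\varphi_n(w))\to 0$, so $d_H(\psi_n(w),\Id)\to 1$.

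I do not anticipate a genuine obstacle here. The argument is a chain of Hamming-perturbative applications of the four already-proved claims; the only care required is bookkeeping that the cumulative distances remain $o(1)$, which follows routinely since each perturbation bound is controlled either by $\defect(\varphi_n)$ or by a quantity derived from it via the triangle inequality. If anything deserves attention, it is the harmless but essential observation that augmenting $R$ by $\tau^2$ and the commutators $[\tau,s]$ is legitimate and does not destroy the almost-action quality of $\varphi_n$ under the new presentation.
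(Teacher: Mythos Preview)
Your proposal is correct and follows essentially the same route as the paper: augment $R$ with $\tau^2$ and the commutators $[\tau,s]$, then apply \pref{claim:fixing_to_involution}, \pref{claim:fixing_fixed_point_free}, and \pref{claim:fixing_involutions_to_commute} in succession, invoking \pref{claim:perturbing_almost_actions} after each step to keep the defect $o(1)$, and finally transfer the separation property via the triangle inequality. The paper's proof is exactly this chain of perturbations, with the same bookkeeping you outline.
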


\begin{proof}
    First, we can assume without loss of generality that $\tau^2\in R$, and that $[\tau,s]\in R$ for every $s\in S$. Furthermore, let $\ell$ be the maximal length of a relation $r\in R$. As $\Gamma$ is sofic, it has some sofic approximation consisting of $\eps_n$-almost actions $\varphi_n\colon S\to \Sym(\Omega_n)$, where $\eps_n\xrightarrow{n\to\infty}0$. In particular, 
    \[
   d_H(\varphi_n(\tau^2),\Id_{\Omega_n})\leq  \eps_n.
    \]
    Let $\eps'_n:=1-d_H(\varphi_n(\tau),\Id_{\Omega_n})$, and as $\Id_\Gamma\neq\tau\in \Gamma$, $\eps'_n\xrightarrow{n\to \infty}0$ as well.

    By \pref{claim:fixing_to_involution}, there are functions $f_n\colon S\to \Sym(\Omega_n)$ such that $f_n(\tau^2)=\Id$, $f_n(s)=\varphi_n(s)$ for every $\tau\neq s\in S$,  and $d_H(f_n(\tau),\varphi_n(\tau))\leq  \eps_n$. Thus, $d_H(f_n,\varphi_n)\leq  \eps_n$, and by \pref{claim:perturbing_almost_actions}, 
    \[
    \defect(f_n)\leq \eps_n+(\ell+1)\cdot \eps_n=(\ell+2)\eps_n. 
    \]
    In addition, using the triangle inequality of the Hamming metric,
    \[
d_H(f_n(\tau),\Id_{\Omega_n})\geq \underbrace{d_H(\varphi_n(\tau),\Id_{\Omega_n})}_{1-\eps'_n}-\underbrace{d_H(f_n(\tau),\varphi_n(\tau))}_{\leq  \eps_n}\geq 1-\eps'_n-\eps_n.
    \]
    Hence, by \pref{claim:fixing_fixed_point_free}, there are functions $g_n\colon S\to (\Sigma_{n})_{\pm}$, where $\Omega_n\subseteq (\Sigma_n)_{\pm}$, such that $g_n(\tau)=-\Id$, and $d_H(g_n,f_n)\leq 2(\eps'_n+\eps_n)$.
    Thus, by \pref{claim:perturbing_almost_actions}, 
    \[
\defect(g_n)\leq \defect(f_n)+(\ell+1)d_H(g_n,f_n)\leq (\ell+2)\eps_n+(\ell+1)\cdot 2(\eps'_n+\eps_n)=(2\ell+2)\eps'_n+(3\ell+4)\eps_n,
    \]
    and in particular
    \[
\forall \tau\neq s\in S\ \colon \ \ d_H(g_n([\tau,s],\Id_{(\Sigma_n)_\pm}))\leq (2\ell+2)\eps'_n+(3\ell+4)\eps_n.
    \]
    Applying \pref{claim:fixing_involutions_to_commute} for every $s\neq \tau$ independently, there are functions $\psi_n\colon S\to \Sym((\Sigma_n)_{\pm})$ such that  $\psi_n(\tau)=-\Id$ (namely $\tau$ is sent to the sign flip), $\psi_n(\tau)\psi_n(s)=\psi_n(s)\psi_n(\tau)$ (namely all the images commute with the sign flip), and $d_H(\psi_n,g_n)\leq (2\ell+2)\eps'_n+(3\ell+4)\eps_n$. Finally, applying \pref{claim:perturbing_almost_actions} once more, we deduce that $\psi_n$ are $O(\eps_n+\eps'_n)$-almost actions (which in particular says $\defect(\psi_n)\xrightarrow{n\to \infty}0$), and for every $w\notin \langle \langle R\rangle \rangle$ we have 
    \[
    \begin{split}
        d_H(\psi_n(w),\Id_{(\Sigma_n)_{\pm}})&\geq d_H(\varphi_n(w),\Id_{\Omega_n})-d_H(\Id_{\Omega_n},\Id_{(\Sigma_n)_\pm})-d_H(\varphi_n(w),f_n(w))\\
        &\ \ -d_H(f_n(w),g_n(w))-d_H(g_n(w),\psi_n(w))\\
        &=1-o_n(1).
    \end{split}
    \]
    Hence, $\psi_n$ is a sofic approximation that satisfies the required conditions.
\end{proof}

\subsection[Cohomology of simplicial complexes and central 2-extensions of their fundamental groups]{Cohomology of simplicial complexes and central $2$-extensions of their fundamental groups} \label{sec:extension-facts}
For the standard definitions of covering spaces and the fundamental group see e.g.\ \cite[Chapter 1]{Hatcher2002}. Here we merely state the facts we need.
Denote by \(\dir{X}(i)\) the set of oriented faces of a complex \(X\).
\begin{fact}[Presenting  the fundamental group of a simplicial complex]\label{fact:presentations_of_fundamental_groups}
    Given a connected simplicial complex $X$ and a rooted spanning tree $(T,v)$ of the $1$-skeleton of $X$. Its fundamental group $\Gamma = \pi_1(X,v)$ has the following presentation \(\Gamma \cong \iprod{S | R}\) where: 
    \begin{enumerate}
        \item The generators are $S=\vec X(1)$,
        \item The relations \(R\) contain each oriented edge $xy$ in the tree $T$, the backtracking relations \(xy \cdot yx\) for every \(xy \in \vec X(1)\), and for each oriented triangle $xyz\in \vec X(2)$ a relation $xy\cdot yz\cdot zx$.
        \item The isomorphism $\Gamma = \pi_1(X,v)$ sends \(xy \in S\) to the loop that starts at the basepoint \(v\), traverses to \(x\) through the tree \(T\), crosses the edge \(xy\), and traverses back from \(y\) to \(v\) again through the tree.
    \end{enumerate}
\end{fact}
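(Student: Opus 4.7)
The plan is to follow the standard algebraic-topology argument that computes $\pi_1$ of a CW complex from its $2$-skeleton, with a small bookkeeping step to match the redundant generating set $S=\vec X(1)$ used in the statement. First, by cellular approximation, $\pi_1(X,v)$ depends only on the $2$-skeleton of $X$, so I may assume $X$ has no cells of dimension above $2$. The proof then splits cleanly into two stages: compute the fundamental group of the $1$-skeleton, and then impose one relation per triangle.

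For the first stage, the $1$-skeleton is a connected graph, so $\pi_1$ of it is free; the spanning tree $T$ supplies a canonical free basis consisting of one generator per unoriented non-tree edge. To match the generating set $S=\vec X(1)$ in the statement, I enlarge this basis by a sequence of Tietze transformations: for each oriented edge $xy\in\vec X(1)$ I adjoin a generator $\gamma_{xy}$, namely the loop that traverses $T$ from $v$ to $x$, crosses $xy$, and returns through $T$ from $y$ to $v$. The redundant generators coming from tree edges are then killed by relations $\gamma_{xy}=1$ for $xy\in T$, and the two orientations of every edge are identified by backtracking relations $\gamma_{xy}\cdot\gamma_{yx}=1$. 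These are exactly the first two families of relations in the statement, and the resulting presentation still presents the free fundamental group of the $1$-skeleton.

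For the second stage, each triangle $\{x,y,z\}\in X(2)$ is glued to the $1$-skeleton along its boundary, a loop of length three. By Seifert--van Kampen applied iteratively, attaching a $2$-cell along a based loop $\ell$ adds exactly the relation $[\ell]=1$ in $\pi_1$. Rebasing the boundary of $\{x,y,z\}$ at the basepoint $v$ through any tree path produces a conjugate of $\gamma_{xy}\cdot\gamma_{yz}\cdot\gamma_{zx}$, and since the conjugating tree path is already trivial in the quotient by the tree relations, the added relation reduces cleanly to $xy\cdot yz\cdot zx=1$. The explicit isomorphism asserted in item~(3) is then precisely the map $xy\mapsto\gamma_{xy}$ that was used throughout the construction.

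The only step requiring genuine care is the basepoint bookkeeping in the second stage: one must verify that the conjugating tree path used to rebase a triangle boundary at $v$ does not smuggle in any genuinely new relations. This is automatic once the tree relations have been imposed, and every other ingredient is textbook material (see \cite[Chapter~1]{Hatcher2002}), which is why the statement is presented as a fact rather than a theorem.
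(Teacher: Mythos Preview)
Your argument is correct and is the standard textbook proof of this fact. The paper does not actually prove \pref{fact:presentations_of_fundamental_groups}; it is stated as a background fact with a pointer to \cite[Chapter~1]{Hatcher2002}, and only the later facts in that subsection are proved in the appendix. So there is nothing to compare against, and your cellular-approximation / spanning-tree / Seifert--van Kampen outline is exactly the kind of justification one would expect.
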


\begin{definition}[Central $2$-extensions]\label{def:central_extensions}
    Let $\Gamma$ be a group. A  central $2$-extension of $\Gamma$ is a group $\widetilde{\Gamma}$ together with a short exact sequence
\[
0\to \nicefrac{\mathbb{Z}}{2\mathbb{Z}}\xrightarrow{\iota}\widetilde{\Gamma}\xrightarrow{\pi} \Gamma \to 1,
\]
where $\iota(1)=\tau$ is central in $\widetilde{\Gamma}$.\footnote{The assumption that $\iota(1)$ is central is actually redundant. 
We kept it for emphasis.} 
Two such extensions $\widetilde{\Gamma}_1$ and $\widetilde{\Gamma}_2$ are said to be isomorphic if there is an isomorphism of groups $f\colon \widetilde{\Gamma}_1\to \widetilde{\Gamma}_2$ such that $\iota_2=f\circ \iota_1$ and $\pi_2=f\circ \pi_1$.
\end{definition}

Fixing a complex \(X\) whose fundamental group \(\Gamma\) has a presentation as in \pref{fact:presentations_of_fundamental_groups}, we can describe a correspondence between central \(2\)-extensions and the second cohomology of \(X\). This correspondence is standard but we spell it out to stay self contained. Let \(\widetilde{\Gamma}\) be a central \(2\)-extension and \(\widetilde{S} \subseteq \widetilde{\Gamma}\) be a \(\pi\)-section of the generating set with \(\pi(\widetilde{xy}) = xy\). Note that for every triangle \(xy \cdot yz \cdot zx \in R\), the corresponding \(\widetilde{xy} \cdot \widetilde{yz} \cdot \widetilde{zx} \in \ker \pi = \set{1,\tau}\). Therefore, we can define the following cochain \(\phi^{\widetilde{S}}:X(2) \to \nicefrac{\Z}{2\Z}\) to be such that \(\widetilde{xy}\cdot \widetilde{yz}\cdot \widetilde{zx}=\tau^{\phi^{\widetilde{S}}(xyz)}\). Let us record the following facts, all of which are standard but we prove them to stay self contained.
\begin{fact}\label{fact:correspondence_2_ext_and_cohomology}
With the notation above,
\begin{enumerate}
    \item For every \(\pi\)-section \(\widetilde{S} \subseteq \widetilde{\Gamma}\) of $S$, the function $\phi^{\widetilde{S}}$ is a cocycle, namely \(\phi^{\widetilde{S}} \in Z^2(X,\nicefrac{\Z}{2\Z})\) .
    \item For every two \(\pi\)-sections \(\widetilde{S}_1, \widetilde{S}_2 \subseteq \widetilde{\Gamma}\) of $S$, the difference $\phi^{\widetilde{S}_1} - \phi^{\widetilde{S}_2}$ is a coboundary, namely \(\phi^{\widetilde{S}_1} - \phi^{\widetilde{S}_2}  \in B^2(X,\nicefrac{\Z}{2\Z})\) . Conversely, given a \(\pi\)-section \(\widetilde{S}=\{\widetilde{xy}\mid xy\in \overrightarrow X(1)\} \subseteq \widetilde{\Gamma}\) and a cochain  \( \psi \in C^1(X,\nicefrac{\Z}{2\Z})\) the set $\widetilde{S}'=\{\widetilde{xy}\cdot \tau^{\psi(xy)}\mid xy\in \overrightarrow X(1)\}\subseteq \Gamma$ is a \(\pi\)-section of $S$ and satisfies \( \phi^{\widetilde{S}} - \phi^{\widetilde{S}'}  = \coboundary \psi\).
\end{enumerate}
\end{fact}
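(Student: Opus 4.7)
The plan is to handle Part 2 of the Fact first (a direct calculation using the centrality of $\tau$) and then leverage it to reduce Part 1 to a convenient normalized choice of section.

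For Part 2, I would fix two $\pi$-sections $\widetilde{S}_1 = \{\widetilde{xy}_1\}_{xy \in \vec X(1)}$ and $\widetilde{S}_2 = \{\widetilde{xy}_2\}_{xy \in \vec X(1)}$. Since $\pi(\widetilde{xy}_1) = \pi(\widetilde{xy}_2) = xy$, we can write $\widetilde{xy}_1 = \widetilde{xy}_2 \cdot \tau^{\psi(xy)}$ for a uniquely determined $\psi(xy) \in \nicefrac{\Z}{2\Z}$; this defines a $1$-cochain $\psi \in C^1(X,\nicefrac{\Z}{2\Z})$. Expanding the defining product for $\phi^{\widetilde{S}_1}(xyz)$ and using the centrality of $\tau$ to collect all the $\tau$-factors yields
\[
\tau^{\phi^{\widetilde{S}_1}(xyz)} = \widetilde{xy}_1 \widetilde{yz}_1 \widetilde{zx}_1 = \widetilde{xy}_2 \widetilde{yz}_2 \widetilde{zx}_2 \cdot \tau^{\psi(xy)+\psi(yz)+\psi(zx)} = \tau^{\phi^{\widetilde{S}_2}(xyz) + \coboundary \psi(xyz)},
\]
which gives $\phi^{\widetilde{S}_1} - \phi^{\widetilde{S}_2} = \coboundary \psi$. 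The converse direction is the same computation read the other way: the set $\widetilde{S}' = \{\widetilde{xy} \cdot \tau^{\psi(xy)}\}$ is plainly a $\pi$-section (applying $\pi$ gives $xy \cdot 1 = xy$), and the identical calculation yields $\phi^{\widetilde{S}} - \phi^{\widetilde{S}'} = \coboundary \psi$.

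For Part 1, observe that by Part 2 the property of $\phi^{\widetilde{S}}$ being a cocycle depends only on its cohomology class, so I may replace $\widetilde{S}$ with any convenient section of my choice. I would fix an arbitrary orientation on every edge of $X$, pick lifts $\widetilde{xy}$ freely for the chosen orientations, and define $\widetilde{yx} := \widetilde{xy}^{-1}$ on the opposite orientations. Under this normalization, the triangle relation $\widetilde{xy}\widetilde{yz}\widetilde{zx} = \tau^{\phi(xyz)}$ rearranges (using $\widetilde{zx} = \widetilde{xz}^{-1}$ and $\tau^{-1} = \tau$) into the handier identity $\widetilde{xz} = \tau^{\phi(xyz)} \widetilde{xy}\widetilde{yz}$. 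On a fixed $3$-cell $xyzw \in X(3)$, I would then expand $\widetilde{xw}$ along two different $2$-face paths on the boundary of the tetrahedron: chaining triangles $xyz$ and $xzw$ gives $\widetilde{xw} = \tau^{\phi(xyz)+\phi(xzw)} \widetilde{xy}\widetilde{yz}\widetilde{zw}$, whereas chaining triangles $xyw$ and $yzw$ gives $\widetilde{xw} = \tau^{\phi(xyw)+\phi(yzw)} \widetilde{xy}\widetilde{yz}\widetilde{zw}$. Equating the two expressions, cancelling $\widetilde{xy}\widetilde{yz}\widetilde{zw}$ on the right, and using that $\tau$ has order $2$ yields
\[
\phi(xyz) + \phi(xyw) + \phi(xzw) + \phi(yzw) = 0 \pmod 2,
\]
which is exactly the cocycle identity $\coboundary \phi^{\widetilde{S}}(xyzw) = 0$.

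The main subtlety lies in the careful handling of orientations. The cochain $\phi^{\widetilde{S}}$ is declared as a function on the \emph{unoriented} $2$-cells $X(2)$, whereas the defining product uses an oriented triangle relation from $R$; one must therefore verify that cyclic permutation and orientation reversal of $xyz$ yield the same element of $\nicefrac{\Z}{2\Z}$. Cyclic permutation is handled by the centrality of $\tau$ (conjugating $\widetilde{xy}\widetilde{yz}\widetilde{zx} = \tau^{\phi(xyz)}$ by $\widetilde{xy}^{-1}$ leaves $\tau^{\phi(xyz)}$ fixed), and orientation reversal follows cleanly from the normalization $\widetilde{yx} = \widetilde{xy}^{-1}$ combined with $\tau^{-1} = \tau$. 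A general, unnormalized section would require tracking additional $\tau$-factors coming from the backtracking relations $\widetilde{xy}\widetilde{yx} \in \ker\pi$ throughout the computation, which is precisely why the reduction via Part 2 makes the whole argument cleaner.
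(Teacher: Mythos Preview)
Your proposal is correct and follows essentially the same route as the paper. The only organizational difference is that the paper proves Part~1 first via a single long product over the four triangles of the tetrahedron that cancels in the free group (implicitly using the backtracking normalization \(\widetilde{yx}=\widetilde{xy}^{-1}\)), whereas you first establish Part~2 and then invoke it to justify passing to a normalized section before carrying out the equivalent ``expand \(\widetilde{xw}\) two ways'' computation; your explicit handling of the orientation/well-definedness subtlety is in fact more careful than the paper's sketch.
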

\begin{proof}[Proof of \pref{fact:correspondence_2_ext_and_cohomology}]
      Staring with the first item, let \(\phi=\phi_{\widetilde{S}}\) be as in the statement and let $xyzw$ be a $3$-cell of $X$. Then 
\[
\begin{split}
    \Id_{\widetilde{\Gamma}} &=(\widetilde{xy}\cdot\widetilde{yz}\cdot\widetilde{zw}\cdot\widetilde{wy}\cdot\widetilde{yx})\cdot (\widetilde{xy}\cdot \widetilde{yw}\cdot \widetilde{wx})\cdot (\widetilde{xw}\cdot \widetilde{wz}\cdot \widetilde{zx})\cdot (\widetilde{xz}\cdot \widetilde{zy}\cdot \widetilde{yx})\\
    &=\tau^{\phi(yzw)}\cdot\tau^{\phi(xyw)}\cdot\tau^{\phi(xwz)}\cdot\tau^{\phi(xzy)},
\end{split}
\]
where the first equality stems from the fact that this path is trivial in the free group, and the second from the definition of $\phi$ and the fact $\tau$ is central. This implies $\phi(yzw)+\phi(xyw)+\phi(xwz)+\phi(xzy)=0$, i.e.\ that \(\phi\) is a cocycle. 
As for the second item, it is straightforward to check that changing a member \(\widetilde{xy} \in \widetilde{S}\) to \(\tau\cdot  \widetilde{xy}\) corresponds to adding the coboundary \(\coboundary \one_{xy}\) to \(\phi_{\widetilde{S}}\), where  \(\one_{xy}\) is the indicator of the edge \(xy\).
\end{proof}

Given \pref{fact:correspondence_2_ext_and_cohomology}, we can define an injective (yet, not necessarily surjective) correspondence $\Xi$ between the isomorphism classes of central \(2\)-extensions of the fundamental group of a simplicial complex $X$, and the second cohomology group of $X$ with $\nicefrac{\Z}{2\Z}$-coefficients, by  mapping \(\widetilde{\Gamma}\) to the cohomology class \(\Xi(\widetilde{\Gamma}) \coloneqq [\phi^{\widetilde{S}}]\), where $\widetilde{S} \subseteq \widetilde{\Gamma}$ is any \(\pi\)-section of $S=\overrightarrow X(1)$.
\begin{fact}[The correspondence between central $2$-extensions and the second cohomology]\label{fact:correspondence_2_ext_and_cohomology-2}
   Let $\widetilde{\Gamma}_1,\widetilde{\Gamma}_2$ be two central $2$-extensions  of the fundamental group $\Gamma$ of a simplicial complex $X$. If $\Xi(\widetilde{\Gamma}_1)=\Xi(\widetilde{\Gamma}_2)$, then $\widetilde{\Gamma}_1$ and $\widetilde{\Gamma}_2$ are isomorphic. Moreover, the split extension $\nicefrac{\Z}{2\Z}\times \Gamma$ is mapped by $\Xi$ to the cohomology class of the coboundaries, namely $ \Xi(\nicefrac{\Z}{2\Z}\times \Gamma) = B^2(X,\nicefrac{\Z}{2\Z})\in H^2(X,\nicefrac{\Z}{2\Z})$.
\end{fact}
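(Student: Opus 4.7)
The plan is to use the explicit presentation of $\widetilde{\Gamma}_i$ that \pref{fact:presentations_of_fundamental_groups} and \pref{fact:correspondence_2_ext_and_cohomology} together hand us, and construct an isomorphism directly from the generators.

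First I would reduce to working with what I will call \emph{nice} sections: $\pi$-sections $\widetilde{S}$ for which every tree edge lifts to the identity, i.e.\ $\widetilde{xy}=1_{\widetilde{\Gamma}}$ for $xy\in T$, and inverse pairs are genuine inverses, i.e.\ $\widetilde{yx}=\widetilde{xy}^{-1}$. Both conditions can be arranged because the relevant products land in $\ker\pi=\{1,\tau\}$ and we are free to post-multiply by $\tau$; by the second part of \pref{fact:correspondence_2_ext_and_cohomology} these adjustments change $\phi^{\widetilde{S}}$ only by a coboundary and so leave $\Xi(\widetilde{\Gamma})$ unaffected. With a nice section in hand, all defining relations of $\Gamma=\langle S\mid R\rangle$ are satisfied in $\widetilde{\Gamma}$ up to a $\tau$-twist: tree and backtracking relations hold on the nose, while each oriented triangle satisfies $\widetilde{xy}\widetilde{yz}\widetilde{zx}=\tau^{\phi^{\widetilde{S}}(xyz)}$. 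Combined with $\tau^{2}=1$ and the centrality of $\tau$, this is a presentation of $\widetilde{\Gamma}$ whose relation set depends on $\widetilde{S}$ only through $\phi^{\widetilde{S}}$.

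For Part 1, I assume $\Xi(\widetilde{\Gamma}_1)=\Xi(\widetilde{\Gamma}_2)$ and choose nice sections $\widetilde{S}_1,\widetilde{S}_2$ in the two extensions. Then $\phi^{\widetilde{S}_1}-\phi^{\widetilde{S}_2}=\coboundary\psi$ for some $\psi\in C^1(X,\nicefrac{\Z}{2\Z})$; after adjusting $\psi$ by a $0$-coboundary so that it vanishes on tree edges, I invoke \pref{fact:correspondence_2_ext_and_cohomology}(2) to twist $\widetilde{S}_2$ into a new nice section whose cocycle equals $\phi^{\widetilde{S}_1}$ exactly. I then define $f\colon\widetilde{\Gamma}_1\to\widetilde{\Gamma}_2$ on generators by $f(\tau_1)=\tau_2$ and $f(\widetilde{xy}_1)=\widetilde{xy}_2$; the matching presentations guarantee that $f$ extends to a well-defined group homomorphism, a symmetric construction gives the inverse, and the compatibility $\pi_1=\pi_2\circ f$ and $\iota_2=f\circ\iota_1$ is immediate from the action on generators. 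Part 2 is then a quick computation: the split extension $\nicefrac{\Z}{2\Z}\times\Gamma$ admits the canonical section $\widetilde{xy}=(0,xy)$, which is automatically nice because tree edges and backtracking pairs already trivialize in $\Gamma$, and for any oriented triangle one has $(0,xy)(0,yz)(0,zx)=(0,xy\cdot yz\cdot zx)=(0,1_\Gamma)$, so $\phi^{\widetilde{S}}\equiv 0$ and $\Xi(\nicefrac{\Z}{2\Z}\times\Gamma)=[0]\in H^2(X,\nicefrac{\Z}{2\Z})$.

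The main obstacle I anticipate is justifying the claim that the relations derived from a nice section genuinely present $\widetilde{\Gamma}$, rather than some quotient. I plan to handle this by noting that the group defined by the presentation surjects onto $\widetilde{\Gamma}$ by construction and is itself a central $\nicefrac{\Z}{2\Z}$-extension of $\Gamma$ (via $\widetilde{xy}\mapsto xy$, $\tau\mapsto 1$), and then using the five lemma (or a direct comparison of kernels) to force this surjection to be an isomorphism. Everything beyond this is bookkeeping with the definitions of $\phi^{\widetilde{S}}$ and $\Xi$.
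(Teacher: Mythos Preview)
Your proposal is correct and follows essentially the same approach as the paper: reconstruct \(\widetilde{\Gamma}\) from an explicit presentation depending only on the cocycle \(\phi\), then read off injectivity of \(\Xi\) and compute the split case via the obvious section \(\widetilde{xy}=(0,xy)\). Your treatment is in fact more careful than the paper's sketch --- you make explicit the need for ``nice'' sections to handle the tree and backtracking relations, and you spell out the five-lemma argument that the paper hides under ``straightforward''.
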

\begin{proof}[Proof sketch of \pref{fact:correspondence_2_ext_and_cohomology-2}]
We note that \(\widetilde{\Gamma}\) can be reconstructed (up to isomorphism) given $[\phi] = \Xi(\widetilde{\Gamma})$. In particular, it is isomotphic to the following presentation: the generating set is $S\cup \{\tau\}$, where \(S\) is the generating set of \(\Gamma\), and the relations are that $\tau$ is of order two and central, while $\widetilde{xy}\cdot \widetilde{yz}\cdot \widetilde{zx}=\tau^{\phi(xyz)}$ for every triangle in $X$. Verifying that this is indeed isomorphic to \(\widetilde{\Gamma}\) is straightforward. This in particular implies that \(\Xi\) is injective.
Finally, for the split extension, by choosing the $\pi$-section of $\nicefrac{\Z}{2\Z}\times \Gamma$ to be such that there is a  $0$ in the left coordinate of all the representatives, one gets that $\Xi(\widetilde{\Gamma})$ is the zero $2$-cochain.
\end{proof}

Finally, the following fact relates the extension of a subgroup the push forward of the corresponding covering map.
\begin{fact}\label{fact:subgp_extension_cocycle}
    Let \(H \leq \Gamma\) be a finite index subgroup and let $\mathcal{P}\colon Y\to X$ be a combinatorial covering map such that \(H\) is the fundamental group of the simplicial complex \(Y\). Let \(\widetilde{\Gamma}\) be a central \(2\)-extension and let \([\phi] = \Xi(\widetilde{\Gamma})\). Let \(\pi^{-1}(H) = \widetilde{H}\) be with the short exact sequence
    \[
    0\to \nicefrac{\Z}{2\Z}\xrightarrow{\iota}\widetilde{H}\xrightarrow{\pi|_{\widetilde{H}}}H\to 1,
    \]
    and let \([\psi] = \Xi(\widetilde{H})\). Then \([\psi]=[\phi \circ \mathcal{P}]\).
\end{fact}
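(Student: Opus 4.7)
The plan is to compute both cocycles \(\phi\) and \(\psi\) using compatible, explicitly related sections, and then show that the triangle identity on \(Y\) reduces to the one on \(X\) by conjugation and the centrality of \(\tau\).

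Fix a rooted spanning tree \((T,v)\) of the \(1\)-skeleton of \(X\) giving rise to the presentation of \(\Gamma\) in \pref{fact:presentations_of_fundamental_groups}, and let \(\widetilde{S}=\{\widetilde{xy}\mid xy\in \vec{X}(1)\}\) be a \(\pi\)-section normalized so that \(\widetilde{yx}=\widetilde{xy}^{-1}\); put \(\phi=\phi^{\widetilde{S}}\). Choose also a rooted spanning tree \((\widetilde{T},\tilde{v})\) of the \(1\)-skeleton of \(Y\) with \(\mathcal{P}(\tilde{v})=v\). For each vertex \(\tilde{a}\in Y(0)\), let \(w_{\tilde{a}}\in \mathcal{F}_S\) be the word obtained by projecting the unique \(\widetilde{T}\)-path from \(\tilde{v}\) to \(\tilde{a}\) edge by edge through \(\mathcal{P}\), and let \(Q_{\tilde{a}}\in \widetilde{\Gamma}\) be its evaluation via \(\widetilde{S}\). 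For each oriented edge \(\tilde{a}\tilde{b}\in \vec{Y}(1)\) define
\[
\widetilde{\tilde{a}\tilde{b}}\ \coloneqq\ Q_{\tilde{a}}\cdot \widetilde{\mathcal{P}(\tilde{a})\mathcal{P}(\tilde{b})}\cdot Q_{\tilde{b}}^{-1}.
\]
Using \pref{fact:presentations_of_fundamental_groups} and the standard description of the inclusion \(H\hookrightarrow \Gamma\) at the level of loops, one checks that \(\pi(\widetilde{\tilde{a}\tilde{b}})\) equals the image in \(\Gamma\) of the generator \(\tilde{a}\tilde{b}\) of \(H=\pi_1(Y,\tilde{v})\). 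Hence \(\widetilde{\tilde{a}\tilde{b}}\in \pi^{-1}(H)=\widetilde{H}\), and \(\widetilde{\widetilde{S}}=\{\widetilde{\tilde{a}\tilde{b}}\}\) is a \(\pi|_{\widetilde{H}}\)-section of the edge generators of \(H\), still satisfying backtracking.

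Let \(\psi=\phi^{\widetilde{\widetilde{S}}}\). For any oriented triangle \(\tilde{a}\tilde{b}\tilde{c}\in \vec{Y}(2)\), write \(a=\mathcal{P}(\tilde{a})\), \(b=\mathcal{P}(\tilde{b})\), \(c=\mathcal{P}(\tilde{c})\). The backtracking normalization makes the inner factors \(Q_{\tilde{b}}^{-1}Q_{\tilde{b}}\) and \(Q_{\tilde{c}}^{-1}Q_{\tilde{c}}\) telescope to \(1\), so
\[
\widetilde{\tilde{a}\tilde{b}}\cdot \widetilde{\tilde{b}\tilde{c}}\cdot \widetilde{\tilde{c}\tilde{a}}\ =\ Q_{\tilde{a}}\cdot \bigl(\widetilde{ab}\cdot \widetilde{bc}\cdot \widetilde{ca}\bigr)\cdot Q_{\tilde{a}}^{-1}\ =\ Q_{\tilde{a}}\cdot \tau^{\phi(abc)}\cdot Q_{\tilde{a}}^{-1}\ =\ \tau^{\phi(abc)},
\]
using the definition of \(\phi\) and the centrality of \(\tau\) in \(\widetilde{\Gamma}\). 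Hence \(\psi(\tilde{a}\tilde{b}\tilde{c})=\phi(\mathcal{P}(\tilde{a}\tilde{b}\tilde{c}))=(\phi\circ \mathcal{P})(\tilde{a}\tilde{b}\tilde{c})\), so \(\psi=\phi\circ \mathcal{P}\) as cocycles on \(Y\). In particular \([\psi]=[\phi\circ \mathcal{P}]\) in \(H^2(Y;\nicefrac{\Z}{2\Z})\), as required.

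The main obstacle is only the bookkeeping step of verifying that \(\pi(\widetilde{\tilde{a}\tilde{b}})\) equals the image in \(\Gamma\) of the corresponding generator of \(H\). This is standard covering space theory, but requires carefully tracking basepoints, the lift of \(\widetilde{T}\)-paths through \(\mathcal{P}\), and the identification of \(H=\pi_1(Y,\tilde{v})\) with its image in \(\Gamma=\pi_1(X,v)\) via \(\mathcal{P}_{\ast}\). Once this compatibility is in hand, the centrality of \(\tau\) and the telescoping computation above finish the proof.
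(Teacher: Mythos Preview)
Your proof is correct and takes essentially the same approach as the paper's: both define the \(\pi|_{\widetilde{H}}\)-section on edges of \(Y\) by conjugating the given section \(\widetilde{S}\) by lifts of tree-path elements, then telescope the triangle product and invoke centrality of \(\tau\) to obtain \(\psi=\phi\circ\mathcal{P}\). The only cosmetic difference is that the paper chooses the spanning tree of \(Y\) to contain all connected components of \(\mathcal{P}^{-1}(T)\), thereby reducing to one conjugator \(\widetilde{p_i}\) per component, whereas you work with an arbitrary spanning tree and per-vertex conjugators \(Q_{\tilde a}\); the computation is otherwise identical.
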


\begin{proof}[Proof of \pref{fact:subgp_extension_cocycle}]
    By \pref{fact:presentations_of_fundamental_groups} and \pref{fact:correspondence_2_ext_and_cohomology}, \(\Xi(\widetilde{H})\) is independent of the choice of rooted tree for \(Y\) with which we define the presentation of \(H\), and of the \(\pi\)-section of \(\tilde{H}\). Therefore, given \(\phi \in [\phi]\), we will find an appropriate presentation and section of \(H\) such that its corresponding \(\psi \in [\psi]\) has \(\psi = \phi \circ \mathcal{P}\). 
    
    Towards this end, for let \((T,v)\) the chosen tree for \(X\). Let \(F_0,F_1,\dots,F_m\) be the connected components of \(\mathcal{P}^{-1}(T)\). We use the convention that for every \(x \in X(0)\), the vertex \(x_i \in \mathcal{P}^{-1}(x) \cap F_i\) is the unique preimage of \(x\) in \(F_i\). Thus, we take the preimage \(v_0\) of the root \(v \in X(0)\), and take \((T^*,v_0)\) to be a rooted tree for \(Y\) which contains \(F_0,F_1,\dots,F_m\).

    Recall that that an element \(x_i y_j \in Y(1)\), corresponds to (a homotopy class of) a path that starts by traversing from \(v_0\) to \(x_i\) through \(T^*\), then crossing the edge \(x_i y_j\), and finally traversing back from \(y_j\) to \(v_0\) via \(T^*\). This path is homotopically equivalent (via backtracking relations) to the path that traverses from \(v_0\) to \(v_i\) through \(T^*\), from \(v_i\) to \(x_i\) through \(F_i\), crosses the edge \(x_i y_j\), traverses from \(y_j\) to \(v_j\) through \(F_j\) and finally traverses back to \(v_0\) through \(T^*\). Recall that \(\mathcal{P}\) defines the injection \(H \leq \Gamma\). Denoting \(\widetilde p_i\) the path in \(T^*\) from \(v_0\) to \(v_i\), and \(p_i = \mathcal{P}(p_i)\), one obtains that 
    \[\mathcal{P}(x_i y_j) = p_i \cdot xy \cdot p_j^{-1}.\]

    Now given a \(\pi\)-section \(\widetilde{S} \subseteq \widetilde{\Gamma}\) for the generating set of \(\Gamma\) (i.e.\ the edges in \(X\)), whose coycle we denote by \(\phi\). We extend it to a \(\pi\)-section \(\widetilde{S} \cup \set{\widetilde{p_i}}_{i=0}^m \to \tilde{\Gamma}\), and thus get a \(\pi\)-section of the generating set for \(H\), the edges in \(Y\), which is
    \[\widetilde{x_i y_j} = \widetilde{p_i} \cdot \widetilde{xy} \cdot \widetilde{p_j}^{-1},\]
    where again we use the convention that \(x_i \in F_i\) and \(y_j \in F_j\). Let \(\psi\) be the cocycle that corresponds to this section. We claim that \(\psi = \phi \circ \mathcal{P}\). Indeed, let \(x_i y_j z_k \in Y(2)\) such that \(x_i \in F_i, y_j \in F_j\) and \(z_k \in F_k\).

    Thus,
    \begin{align*}
        \tau^{\psi(x_i y_j z_k)} &= \widetilde{x_i y_j} \cdot \widetilde{y_j z_k} \cdot \widetilde{z_k x_i} \\
        &= (\widetilde{p_i} \cdot \widetilde{xy} \cdot \widetilde{p_j}^{-1}) \cdot  (\widetilde{p_j} \cdot \widetilde{yz} \cdot \widetilde{p_k}^{-1}) \cdot  (\widetilde{p_k} \cdot \widetilde{zx} \cdot \widetilde{p_i}^{-1}) \\
        &= \widetilde{p_i} \cdot \widetilde{xy} \cdot \widetilde{yz} \cdot \widetilde{zx} \cdot \widetilde{p_i}^{-1} \\
        &= \widetilde{p_i} \cdot \tau^{\phi(xyz)} \cdot \widetilde{p_i}^{-1} = \tau^{\phi(\mathcal{P}(x_i y_j z_k))}.
    \end{align*}
    The last inequality is because the extension is central. We conclude that the two cocylces are equal.
\end{proof}

\section{Proof of \pref{thm:main}}

    Let \(\Gamma = \iprod{S | R}\) be a presentation of \(\Gamma\) as in \pref{fact:presentations_of_fundamental_groups} with its extension \(\widetilde{\Gamma}\). Let \(\phi=\phi^{\widetilde{S}}\) be as in \pref{fact:correspondence_2_ext_and_cohomology}, for some \(\pi\)-section \(\widetilde{S} \subseteq \widetilde{\Gamma}\). Choose the presentation  $\widetilde{\Gamma}=\langle \widetilde{S}\cup \{\tau\}|\widetilde{R}\rangle$ described in the proof sketch of \pref{fact:correspondence_2_ext_and_cohomology-2}. I.e.\ it is generated by $\widetilde{S}\cup \{\tau\}$, with the relations  $\tau^2=\Id$, $[\tau,\widetilde{xy}]=\Id, 
 \ \widetilde{xy}=\Id$ whenever $xy\in T\subseteq \vec X(1)$, $\widetilde{xy}\cdot\widetilde{yx}=\Id$, and  \(\widetilde{xy} \cdot  \widetilde{yz} \cdot \widetilde{zw} \cdot \tau^{\phi(xyz)} = \Id\) for every triangle $xyz\in X(2)$.
    
    Assume towards contradiction that \(\widetilde{\Gamma}\) is sofic, thus by \pref{cor:nice_sofic_approx} for every \(\eps > 0\) there exists an $\eps$-almost action $\psi\colon \widetilde{S}\cup \{\tau\}\to \Sym(\Omega_\pm)$, where  $\varphi(\tau)=-\Id$, and all other images commute with the sign flip. Let \(\psi\) be such an \(\eps\)-almost action. Define $\varphi\colon S\to \Sym(\Omega)$ by letting $\varphi(xy).\star=|\psi(\widetilde{xy}).\pm\star|$. Hence, for every triangle $xyz\in X(2)$, as all the images of $\psi$ commute with the sign flip, we have
\begin{equation}\label{eq:phi_vs_psi}
\varphi(xy)\varphi(yz)\varphi(zx).\star=|\psi(\widetilde{xy})\psi(\widetilde{yz})\psi(\widetilde{zx}).\pm\star|.
\end{equation}
As $\defect(\psi)\leq\eps$, we have that 
\begin{equation}\label{eq:triangle_almost_ok_psi}
    d_H(\psi(\widetilde{xy})\psi(\widetilde{yz})\psi(\widetilde{zx}),(-\Id)^{\phi(xyz)})\leq  \eps
\end{equation} 
for every triangle $xyz$. Combined with \eqref{eq:phi_vs_psi}, we can deduce that $d_H(\varphi(xy)\varphi(yz)\varphi(zx),\Id_\Omega)\leq \eps,$ which implies $\defect(\varphi)\leq  \eps$. 
As $\Gamma$ is $\rho$-stable, this means that there is a $\Gamma$-action $f\colon S\to \Sym(\Sigma)$, where $\Omega\subseteq \Sigma$, and $d_H(f,\varphi)\leq \rho(\eps)$.  
Let $Y$ be the covering of $X$ induced by $f$, namely it satisfies $Y(0)=X(0)\times \Sigma$, and $(x,\star)(y,\diamond)\in \vec Y(1)$ if and only if $xy\in \vec X(1)$ and $\psi(xy).\diamond=\star$ --- the higher dimensional cells are added by standard algebraic topology arguments \cite{Surowski1984}. Then, $\phi$ induces a $2$-cocycle $\phi'$ on $Y$ by letting $\phi'((x,\star)(y,\diamond)(z,\circ))=\phi(xyz)$.

Let us define a $1$-cochain with $\nicefrac{\Z}{2\Z}$-coefficients on $Y$ as follows: if $(x,\star)(y,\diamond)$ is an edge in $Y$ such that 
\begin{equation}\label{eq:1}
    f(xy).\diamond=\varphi(xy).\diamond=|\psi(\widetilde{xy}).(\pm\diamond)|,
\end{equation} 
then we set \(\zeta((x,\star)(y,\diamond))=0\) if \(\sign(\psi(\widetilde{xy}).(+\diamond)) = +\) and \(\zeta((x,\star)(y,\diamond))=1\) if \(\sign(\psi(\widetilde{xy}).(+\diamond)) = -\). We call edges where \eqref{eq:1} holds \emph{an edge of the first type}. The rest of the edges are assigned $0$ (arbitrarily), and are called \emph{edges of the second type}. Let us observe the coboundary of $\zeta$, namely $\coboundary \zeta$, and show that it is close to \(\phi'\). Let \(\triangle = (x,\star)(y,\diamond)(z,\circ) \in Y(2)\) be such that all edges are of the first type and such that \(\star \in \Omega\) and 
\begin{equation} \label{eq:dist-of-psi}
\psi(\widetilde{xy}) \psi(\widetilde{yz}) \psi(\widetilde{zx}). (+\star) = (-1)^{\phi(xyz)}\star.
\end{equation}
We claim that in this case,
\(\phi'(\triangle) = \coboundary \zeta(\triangle)\):

  As $(z,\circ)(x,\star)$ is of the first type, $\psi(\widetilde{zx}).(+\star)=(-1)^{\zeta((z,\circ)(x,\star))}\circ$. As the images of $\psi$ commute sign flips, this implies that
\[
\psi(\widetilde{xy}) \psi(\widetilde{yz}) \psi(\widetilde{zx}). (+\star)=\psi(\widetilde{xy}) \psi(\widetilde{yz}).(-1)^{\zeta((z,\circ)(x,\star))}\circ=(-1)^{\zeta((z,\circ)(x,\star))}\psi(\widetilde{xy}) \psi(\widetilde{yz}).+\circ\ .
\]
Repeating this calculation twice more, for the edges of the first type $(y,\diamond)(z,\circ)$ and $(x,\star)(y,\diamond)$, we deduce that
\[
\psi(\widetilde{xy}) \psi(\widetilde{yz}) \psi(\widetilde{zx}). (+\star)=(-1)^{\zeta((z,\circ)(x,\star))+\zeta((y,\diamond)(z,\circ))+\zeta((x,\star)(y,\diamond))}\star=(-1)^{\delta\zeta(\triangle)}\star\ .
\]
Combining the above equality with \eqref{eq:dist-of-psi}, we deduce that $\phi(xyz)=\delta\zeta(\triangle)$. On the other hand, $\phi'(\triangle)$ was defined to be equal to $\phi(xyz)$, which proves the required equality. 
\\

Let us bound the weighted distance (induced by \eqref{eq:weighted_norm_on_cells}) between the $2$-cocycle $\phi'$ and the $2$-coboundary $\delta\zeta$. 
By the above analysis, $d_w(\phi',\delta\zeta)$ is bounded by the sum of probabilities, when $\triangle=(x,\star)(y,\diamond)(z,\circ)$ is sampled according to its weight \eqref{eq:def_of_weight_of_cell}, of the following two events:
\begin{enumerate}[(Event 1)]
    \item The triangle $\triangle$ contains an edge of the second type.
    \item Equation \eqref{eq:dist-of-psi} is not satisfied by $\triangle$.
\end{enumerate}
As $Y$ is a covering space of $X$,  (oriented) edges in $Y$ are parametrized by pairs $xy\in \vec X(1),\star\in \Sigma$, and (oriented) triangles in $Y$ are parametrized by $xyz\in \vec X(2),\star\in \Sigma$. 
Furthermore, the weight of the covering cell is $\nicefrac{1}{|\Sigma|}$ times the weight of the cell it covers. 
An edge $(x,\star)(y,f(yx).\star)\in \vec Y(1)$ is of the first type if and only if $f(yx).\star=\varphi(yx).\star$. 
So,
\[
\begin{split}
\Pro{(x,\star)(y,\diamond)\overset{w}{\sim} \vec Y(1)}[(x,\star)(y,\diamond)  \text{ is of the first type}]&=\Ex{xy\overset{w}{\sim} \vec X(1)} \left[ \Pro{\star\in \Sigma}\ [(x,\star)(y,f^{-1}(xy).\star)\in \vec Y(1) \text{ is of the first type}] \right ]\\
&\leq \max_{xy\in \vec X(1)} \Pro{\star\in \Sigma}\ [(x,\star)(y,f^{-1}(xy).\star)\in \vec Y(1)\ {\text{ is of the first type}}]\\
&= \max_{xy\in \vec X(1)}d_H(f(xy),\varphi(xy))\\
&=d_H(f,\varphi)\leq \rho(\eps).
\end{split}
\]
Using the above and a union bound, we can deduce that the probability that (Event $1$) occurs is at most $3\rho(\eps)$.

For \eqref{eq:dist-of-psi} to be satisfied by $\triangle=(x,\star)(y,\diamond)(z,\circ)$, first of all $\star$ needs to be in $\Omega$. But, as sampling a triangle in $\vec Y(2)$ requires sampling $xyz\overset{w}{\sim} \vec X(2)$ and $\star\in \Sigma$ uniformly,  $\star\notin \Omega$  happens exactly $1-\frac{|\Omega|}{|\Sigma|}$ of the times. Recall that $1-\frac{|\Omega|}{|\Sigma|}  \leq d_H(f,\phi)\leq \rho(\eps)$. Given that $\star$ was sampled such that $\star\in \Omega$, as $\defect(\psi)\leq \eps$, we have that \eqref{eq:dist-of-psi} is not satisfied with probability  at most $\eps$. Therefore, the probability (Event 2) occurs is upper bounded by \(\rho(\eps)+\eps\).

All in all, $d_w(\phi',\delta\zeta)\leq \eps+4\rho(\eps)$, which tends to zero with $\eps$. Note that this means that there is a connected component $Y_0$ of $Y$, such that the restrictions of $\phi'$ and $\delta\zeta$ to $Y_0$ are still at most $( \eps+4\rho(\eps))$-apart, namely
\[
d_w(\phi'|_{Y_0},\delta\zeta|_{Y_0})\leq \eps+4\rho(\eps).
\]

On the other hand, as $Y_0$ is a connected covering of $X$, its fundamental group $H$ is a subgroup of $\Gamma$, the fundamental group of $X$. By the subgroups correspondence theorem (fourth isomorphism theorem), as $\pi\colon \widetilde{\Gamma}\to \Gamma$ is an epimorphism, there is a subgroup $\widetilde {H}$ of $\widetilde {\Gamma}$ that contains $\tau$ and satisfies $\pi(\widetilde{H})=H$. Furthermore, according to \pref{fact:subgp_extension_cocycle}, $\phi'$ is in the same cohomology class as the $2$-cocycle defined by the central $2$-extension
\[
0\to \nicefrac{\Z}{2\Z}\xrightarrow{\iota}\widetilde {H}\xrightarrow{\pi|_{\widetilde{H}}}H\to 1.
\] 
 As $\tau\in \widetilde{H}$, and $\tau$ is trivial in every finite quotient of $\widetilde{\Gamma}$, this extension cannot split. Thus, $\phi'|_{Y_0}$ is a non-coboundary cocycle, and by the $\kappa$-large cocycles condition on $Y$, 
we deduce that $d_w(\phi'|_{Y_0},B^2(Y_0;\nicefrac{\Z}{2\Z}))\geq \kappa$. But, $\coboundary \zeta\in B^2$, and we can deduce that 
\[
\kappa\leq \eps + 4\rho(\eps).
\]
as \(\rho\) is a rate function, this gives a positive lower bound on \(\eps\) which is a contradiction. Hence, $\widetilde{\Gamma}$ cannot be sofic.
\qed

\section{A non-stable central extension of a stable group}\label{sec:non-stale_central_extension}
The following family of groups were used in \cite{fournier2023no}. Let $I\subseteq \mathbb{Z}_{>0}$ be a subset of positive integers. Let 
\[
\Gamma=\mathbb{Z}_2 \wr \mathbb{Z}\cong\langle a,t \mid a^2=1\ ,\ \forall n\in \mathbb{Z}_{>0}\ \colon \ [a,t^nat^{-n}]=1 \rangle 
\]
be the (standard) lamplighter group. One can define a $2$-central extension of $\Gamma$ using $I$ in the following way (compare this special case to the general machinery discussed in \pref{sec:extension-facts}):
\begin{equation}\label{eq:def_of_Gamma_I}
\Gamma_I=\left\langle a,t,z\ \middle|\ a^2=z^2=[a,z]=[t,z]=1\ ,\ \forall n\in \mathbb{Z}_{>0}\ \colon \ [a,t^nat^{-n}]=\begin{cases}
    1 & n\notin I\\
    z & n\in I
\end{cases} \right\rangle\ ,    
\end{equation}
with the short exact sequence 
\begin{equation}\label{eq:short_exact_seq_Gamma_I}
    1\to \{1,z\}\hookrightarrow \Gamma_I\xrightarrow{\pi} \Gamma\to 1\ ,
\end{equation}
where $\pi$ is defined by mapping $a\mapsto a, t\mapsto t$ and $z\mapsto 1$. The following claim  and corollary were communicated to us by Corentin Bodart.
\begin{claim}\label{claim:separation_of_z_implies_periodicity}
Let $I\subseteq \mathbb{Z}_{>0}$, and $\Gamma_I$ as in \eqref{eq:def_of_Gamma_I}.    If there is a homomorphism $\varphi\colon \Gamma_I\to F$, where $F$ is a finite group, and $\varphi(z)\neq 1$, then $I_\pm=I\sqcup -I=\{\pm n \mid n\in I\}$ is $|F|$-periodic; namely, if $n\in I_\pm$, then $n\pm |F|\in I_\pm$.
\end{claim}
\begin{proof}
    From the elementary identities $g[h,k]g^{-1}=[ghg^{-1},gkg^{-1}]$ and $[h,k]=[k,h]^{-1}$, together with the fact $z=z^{-1}$ in $\Gamma_I$, we can deduce that 
    \[
    [a,t^nat^{-n}]=t^n[t^{-n}at^n,a]t^{-n}=t^n(\underbrace{[a,t^{-n}at^n]}_{z\ \textrm{or}\ 1})^{-1}t^{-n}=[a,t^{-n}at^n].
    \]
    Hence, the identity on $[a,t^{n}at^{-n}]$ in $\Gamma_I$ can be extended as follows:
    \[
\forall n\in \mathbb{Z}\ \colon \ \ [a,t^{n}at^{-n}]=\begin{cases}
    1 & n\notin I_\pm\\
    z & n\in I_\pm
\end{cases}\ .
    \]
    
    Let $f=|F|$. By Lagrange's theorem, $\varphi(t^f)=\varphi(t)^f=1.$ Hence, if $n\in I_{\pm}$, then 
    \[
\varphi ([a,t^{n+f}at^{-n-f}])=\varphi ([a,t^{n}at^{-n}])=\varphi(z)\neq 1,
    \]
    which implies in particular that $[a,t^{n+f}at^{-n-f}]$ is not the identity in $\Gamma_I$. As $[a,t^{m}at^{-m}]$ is either $1$ or $z$ for every $m\in \mathbb{Z}$, we deduce that $[a,t^{n+f}at^{-n-f}]=z$, and thus $n+f\in I$. The argument for $n-f$ is similar. 
\end{proof}

\begin{corollary}\label{cor:Gamma_I_not_res_finite}
    Let $I=\mathbb{Z}_{>0}$. Then, $\Gamma_I$ \eqref{eq:def_of_Gamma_I} is not residually finite. 
\end{corollary}
\begin{proof}

    As $I\pm=\mathbb{Z}\setminus \{0\}$ is not periodic for any positive integer $f$, by \pref{claim:separation_of_z_implies_periodicity}, there is no homomorphism from $\Gamma_I$ to a finite group that separates $z$ from the identity. In particular, $\Gamma_I$ is not residually finite.
\end{proof}

\begin{fact}\label{fact:Gamma_I_is_sofic}
   Let $I\subseteq \mathbb{Z}_{>0}$. Then, the group $\Gamma_I$ \eqref{eq:def_of_Gamma_I} is solvable. 
\end{fact}
\begin{proof}
    Recall that central extensions of solvable groups are solvable, and that the lamplighter group $\Gamma$ is solvable.
\end{proof}

    \pref{def:soficity} and \pref{def:rho_stability} define soficity and stability for a finitely presented group. For the generalized notion of soficity (for any countable group) see, for example, \cite[Definition 3.1]{pestov2008hyperlinear}. For the generalized notion of stability see, for example, \cite{levit2022infinitely}.

\begin{claim}
    Let $I=\mathbb{Z}_{>0}$. Then, $\Gamma_I$ \eqref{eq:def_of_Gamma_I} is not stable. 
\end{claim}
\begin{proof}
In \cite{GlebskyRivera} it was shown that a sofic and stable group must be residually finite. As $\Gamma_I$ is solvable (Fact \ref{fact:Gamma_I_is_sofic}) and thus sofic, yet not residually finite (Corollary \ref{cor:Gamma_I_not_res_finite}), the claim is deduced. 
\end{proof}

So, fixing $I=\mathbb{Z}_{>0}$, the central extension $\Gamma_I$ in the short exact sequence \eqref{eq:short_exact_seq_Gamma_I} is not stable. But, the lamplighter group $\Gamma$ is stable, by \cite[Theorem 1.3]{levit2022infinitely}. Hence, we found the required example. We do not know if such examples exist where \(\Gamma\) (and hence \(\widetilde{\Gamma}\)) are finitely presented.

\printbibliography
\end{document}